\documentclass[10pt,a4paper]{amsart}

\usepackage[utf8]{inputenc}

\usepackage{amsfonts}

\usepackage{latexsym}

\usepackage{amssymb}

\usepackage{amscd}

\usepackage{hyperref}

\usepackage{amsmath}

\input xy
\xyoption{all}

\theoremstyle{plain}
\newtheorem{theorem*}{Theorem}
\newtheorem{theorem}{Theorem}[section]

\newtheorem{proposition}[theorem]{Proposition}
\newtheorem{lemma}[theorem]{Lemma}
\newtheorem{corollary}[theorem]{Corollary}

\theoremstyle{definition}
\newtheorem{definition}[theorem]{Definition}
\theoremstyle{remark}
\newtheorem{remark}[theorem]{Remark}

\numberwithin{equation}{section}

\def\F{\mathbf{F}}

\def\P{\mathbf{P}}
\def\Z{\mathbf{Z}}

\def\Q{\mathbf{Q}}
\def\N{\mathbf{N}}
\def\A{\mathbf{A}}
\def\d{\mathbf{d}}

\def\Q{\mathbf{Q}}

\def\1{\mathbf{1}}

\def\a{\mathbf{a}}
\def\x{\mathbf{x}}

\def\b{\mathbf{b}}
\def\f{\mathbf{f}}

\def\HH{\mathsf{H}}
\def\AA{\mathsf{A}}

\def \L {\mathcal{L}}
\def \H {\mathcal{H}}

\def \p {\mathfrak{p}}

\DeclareMathOperator{\No}{N}

\DeclareMathOperator{\Norm}{N}

\DeclareMathOperator{\Spec}{Spec}

\def \bgamma {\boldsymbol\gamma}

\author{R\'egis Blache}
\address{\'Equipe LAMIA,
INSP\'E de la Guadeloupe\\
Morne Ferret
97139 Les Abymes F.W.I.}
\email{regis.blache@univ-antilles.fr}

\title[Generic ordinarity]{Generic ordinarity for abelian coverings of the projective line}

\begin{document}

\begin{abstract}
We show that abelian coverings of the projective line of order prime to $p$ are generically $\mu$-ordinary in characteristic $p$. 

The images of the irreducible components of Hurwitz spaces of abelian coverings of the projective line by the Torelli morphism lie in some Shimura varieties. The stratification by Newton polygons of these varieties is known, and we show that the generic Newton polygon for the Hurwitz space coincides with the generic (or $\mu$-ordinary) Newton polygon of the smallest Shimura variety that contains its image.

In order to do this, we compute the generic Newton polygons for $L$-functions associated to multiplicative character sums over the projective line.
\end{abstract}

\subjclass[2020]{11M38,14H}
\keywords{Newton polygons for curves in characteristic $p$ and for $L$-functions associated to multiplicative characters}

\maketitle

\section{Introduction}

Let $p$ denote a prime. The last decades have seen many results about the intersection of the open Torelli locus in characteristic $p$ with the stratifications of the moduli space $\AA_g$ of principally polarized abelian varieties of dimension $g$ by some discrete invariants such as its $p$-rank, $p$-torsion subgroup up to isomorphism (the Ekedahl-Oort strata) or $p$-divisible group up to isogeny (the Newton strata).

In this paper, we focus on this last stratification. It is well-known that the generic Newton polygon for smooth genus $g$ curves is the ordinary one, consisting of two segments of length $g$ and respective slopes $0$ and $1$. In other words, the open Torelli locus in $\AA_g$ intersects the open Newton stratum, curves are generically ordinary.

We restrict our attention to abelian covers of the projective space. We fix an abelian group $G$ of order prime to $p$; the irreducible components of the Hurwitz space $\HH_G$ of $G$-covers of the projective line correspond to monodromy data $\bgamma=(G,N,\a)$, where $N$ is the number of branch points in the covering, and $\a=(a_1,\ldots,a_N)$ is the inertia type consisting of non zero elements of $G$ that generate it and whose sum is trivial. We denote by $\HH(\bgamma)$ the irreducible component associated to $\bgamma$.

For any curve in the Hurwitz space $\HH(\bgamma)$, the action of $G$ extends to its Jacobian, and its image by the Torelli morphism lies in some Shimura variety of PEL-type. This action restricts the admissible Newton polygons \cite{rari}, and the non-empty Newton strata of such varieties are known \cite{viwe}.

Our main result asserts that the intersection of the image of $\HH(\bgamma)$ with the open Newton stratum (corresponding to the lowest, or $\mu$-ordinary polygon) of the smallest Shimura variety containing it is always non-empty. In other words

\begin{theorem}
\label{main}
Curves in the Hurwitz space $\HH(\bgamma)$ are generically $\mu$-ordinary.
\end{theorem}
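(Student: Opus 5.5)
We reduce the statement to a computation of generic Newton polygons of $L$-functions of multiplicative character sums on $\P^1$, and compare the result with the $\mu$-ordinary polygon of the relevant Shimura variety.

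\textbf{Step 1: decomposition by characters.} Since $|G|$ is prime to $p$, we may enlarge the base field so that it contains the values of all characters of $G$. Let $C\to\P^1$ be a cover in $\HH(\bgamma)$, branched at $x_1,\ldots,x_N$ over a finite field $\F_q$. The $G$-action decomposes $H^1_{\mathrm{cris}}(C)$, and equivalently the numerator of the zeta function of $C$, as a product over the non-trivial characters $\chi$ of $G$. Each factor is an $L$-function $L(\chi,T)$ of a multiplicative character sum of the form
\[
\sum_x \chi_1\bigl((x-x_1)^{\chi(a_1)}\cdots(x-x_N)^{\chi(a_N)}\bigr).
\]
Its degree $d_\chi$ is $\#\{i: \chi(a_i)\neq 1\}-2$. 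The Newton polygon of $C$ is then the concatenation of the polygons of the $L(\chi,T)$, grouped by Frobenius orbits of characters.

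\textbf{Step 2: the $\mu$-ordinary target.} The Shimura datum attached to $\bgamma$ gives, for each orbit $\mathcal{O}$ of characters under multiplication by $p$, the signatures $(f_\chi)_{\chi\in\mathcal{O}}$. These are the dimensions of the $\chi$-eigenspaces of $H^0(C,\Omega^1)$, computed by the Chevalley--Weil formula. The $\mu$-ordinary polygon on the $\mathcal{O}$-part is known explicitly (Moonen, Viehmann--Wedhorn). Its slopes are $k/|\mathcal{O}|$, where $k$ ranges over the number of $\chi'\in\mathcal{O}$ with $f_{\chi'}$ at least a given threshold, and the multiplicities are determined by the ordered signatures. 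So it suffices to show that, for generic $(x_i)$, the Newton polygon of $\prod_{\chi\in\mathcal{O}}L(\chi,T)$ over $\F_{q^{|\mathcal{O}|}}$ equals this polygon.

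\textbf{Step 3: Hodge bound and a generic point.} Stickelberger's theorem gives a Hodge-type lower bound for each $L(\chi,T)$. Following Adolphson--Sperber, this bound refined over the orbit coincides with the $\mu$-ordinary polygon. Newton polygons only go up under specialization (Grothendieck--Katz), so generic equality follows once we exhibit one point, or one formal-deformation argument, where the bound is attained. I plan to do this with Dwork's method. We write the Frobenius matrix on the twisted de Rham cohomology using a basis of monomials $x^j\prod(x-x_i)^{-\langle\cdot\rangle}$, with the $x_i$ treated as Teichmüller variables. We then show that the leading coefficients of the relevant minors, that is the Hasse--Witt-type determinants for the partial products over $\mathcal{O}$, are nonzero polynomials in $x_1,\ldots,x_N$ modulo $p$.

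\textbf{Main obstacle.} The hardest step is this nonvanishing. Each minor is a sum over permutations of products of binomial coefficients $\binom{\langle p^k c_i\rangle}{\cdot}$, where $c_i$ is the fractional exponent of $(x-x_i)$. Cancellation could occur. My first approach is to pick a monomial order on $x_1,\ldots,x_N$ and isolate a unique extremal term, for instance by degenerating the branch points to $0$ and $\infty$ in a controlled way. The contributing term then factors into a product of binomial coefficients with entries less than $p$, hence units. If this degeneration proves too crude, I would fall back on induction on $N$: colliding two branch points, as in a clutching argument, and comparing with the polygon of the cover with $N-1$ points.
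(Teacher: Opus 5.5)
\textbf{Verdict: the proposal has a genuine gap — it never shows that the lower bound is attained at even one point.}

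Your Steps 1--2 are essentially the paper's reduction. The paper first passes to cyclic $G$ via \cite{limasi} and then factors $L(C,T)=\prod_k L(\chi,g^k;T)$; this is your character decomposition. The lower bound you invoke in Step 3 is Corollary~\ref{lowerbound}, which is also forced by the Shimura variety itself. The missing half, attainment, is the entire content of the theorem. Step 3 only names the quantity that should be a unit (normalized minors of a Dwork Frobenius matrix) without computing it. Your last paragraph offers two strategies; neither is carried out, and both run into concrete obstructions.

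\emph{The unique extremal term.} The leading part of such a minor, taken over a Frobenius orbit, is a signed sum over permutations of products of entries of several Frobenius twists. A given monomial in the $x_i$ typically arises from many of these products. That some monomial survives with a unit coefficient is exactly what must be proved, and you do not identify a candidate. Letting branch points actually merge makes this worse: exponents add, so binomials $\binom{c_i}{k}$ become $\binom{c_i+c_j}{k}$, which vanish mod $p$ as soon as the digits carry.

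\emph{Induction by clutching.} If $\gamma$ splits into $\gamma_1,\gamma_2$, then $\sigma=\sigma_1+\sigma_2$. But $\Pi(\gamma,e)$ is the graph of $n\mapsto\frac1r\sum_t\max\{0,n-\sigma(t)\}$, an average over $t$. So the concatenation of the pieces' $\mu$-ordinary polygons can lie strictly above $\Pi(\gamma,e)$. For example, take $d=5$, $p\equiv -1\bmod 5$, $\a=(1,1,2,4,4,3)$, split as $(1,1,2,1)$ and $(4,4,3,4)$. Each piece has slopes $(\frac12,\frac12)$, so the boundary point has slopes $(\frac12,\frac12,\frac12,\frac12)$, whereas $\Pi(\gamma,e)$ has slopes $(0,0,1,1)$. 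The induction therefore needs a splitting compatible with $\sigma$ at every step, and you give no argument that one exists.

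\emph{What the paper does instead.} It avoids Frobenius matrices and works only with the coefficient $L_n$ at each vertex abscissa $n$.
\begin{enumerate}
\item Davenport--Hasse turns $L_n$ into a character sum over $\P^n(\F_q)$ of $\prod_i\chi^{a_i}(\ell_i(x))$, where $\ell_i=x_0+\alpha_ix_1+\cdots+\alpha_i^nx_n$.
\item Writing $\ell_i$ for $i\geq n+2$ in the basis $\ell_1,\ldots,\ell_{n+1}$, and expanding with Gauss sums and Ax's interpolation polynomial, gives (\ref{exprSn}): a sum over solutions $U$ of a system of congruences.
\item Stickelberger's congruence then identifies the reduction of $L_n/p^{mB(n)}$, up to an explicit nonzero factor, with the polynomial $\H^{(\pi)}_{\gamma,q}$. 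This polynomial has unit coefficients and is indexed by the minimal solutions $M_n(\gamma,q)$.
\item The digit-by-digit analysis of Section~\ref{sec2} shows $M_n(\gamma,q)\neq\emptyset$ whenever $\sigma$ takes the value $n$, that is, at every vertex.
\item Proposition~\ref{normhasse} removes the dependence on $q$.
\end{enumerate}
Your proposal has no counterpart of either the explicit leading term or this existence statement.

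Even on the paper's route, the final step from $M_n(\gamma,p^r)\neq\emptyset$ to $\H_{\gamma,p}\not\equiv 0$ is not argued separately in the proof of Theorem~\ref{main2}, and it is not automatic. Distinct products of powers of the $\alpha_i-\alpha_j$ can be linearly dependent. For instance, $\det(\alpha_i-\alpha_j)_{4\le i\le 6,\,1\le j\le 3}=0$, because that matrix has rank at most $2$. So whichever route you take, a genuine non-cancellation argument is the piece you still need to supply.
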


This is a generalization of known results. If instead of considering the Newton strata, we consider the --coarser-- $p$-rank strata, we get a theorem of Bouw \cite[Theorem 6.1]{bouw}. Moreover, this result is already known when there are at most $5$ branch points, and the characteristic is large enough \cite[Theorem 1.1]{limasi}. 

It is also known that for $20$ exceptional monodromy data, the image of $\HH(\bgamma)$ is open and dense in the corresponding Shimura variety \cite{moss}. In this case it is shown in \cite{lmpt,lmpt2} that all Newton polygons that occur in the Shimura variety are the polygons of smooth covers of the projective line.

Note also that from \cite[Theorem 1.3.7]{most} the open strata for the Ekedahl-Oort and Newton stratifications agree in a Shimura variety; thus another consequence of the above result, from the point of view of $p$-torsion groups, is that curves in the Hurwitz space $\HH(\bgamma)$ are generically $[p]$-ordinary. 

Finally, another interesting application of this result is that it extends the variety of Newton polygons coming from a Jacobian, and gives many new examples of unlikely intersections between the Torelli locus and some Newton strata \cite{lmpt2}.

We end this introduction with a description of Theorem \ref{main2}, which is the other main result of this article, and we reduce the proof of Theorem \ref{main} to its proof.

We fix some integers $d\geq 2$ coprime to $p$, and $\a=(a_1,\ldots,a_N)$ such that $d$ divides $\sum a_i$ and $\gcd(d,a_1,\ldots,a_N)=1$. We set $\gamma:=(d,N,\a)$. 

Let $\chi$ denote a character of order $d$ of $\overline{\F}_p^\times$. When $g$ runs over the family of polynomials of the form $\prod_{i=1}^N (x-\alpha_i)^{a_i}$ with pairwise distinct $\alpha_i$, the Newton polygons of the multiplicative $L$-functions $L(\chi,g;T)$ (see Section \ref{sec1} for a precise definition) attain a lower bound over some Zariski open subset; this is Grothendieck's specialization theorem \cite[Theorem 2.3.1]{kasl}. The lower bound is the generic Newton polygon associated to this family. 

In theorem \ref{main2}, we determine this polygon for all $d$, $p$ and $\a$ as above, and the corresponding Hasse polynomial $\H_{\gamma,p}$, the equation in $\F_p[\alpha_1,\ldots,\alpha_N]$ of the hypersurface whose complementary is the open Newton stratum. 

This polygon (see Definition \ref{defpoly}) is a part of the $\mu$-ordinary polygon, corresponding to an orbit of multiplication by $p$ in the group $\Z/d\Z$. It is already defined (in a different way) in \cite[Section 5]{doll}.

Finally, let us reduce Theorem \ref{main} to Theorem \ref{main2}. First note that we can reduce to the cyclic case from \cite[Section 3]{limasi}. If we consider a cyclic monodromy datum $\bgamma=(\Z/d\Z,N,\a)$, a curve $C$ in the corresponding Hurwitz space has an equation of the form $y^d=\prod_{i=1}^N (x-\alpha_i)^{a_i}$ with pairwise distinct $\alpha_i$ from \cite[Lemma 7.1]{prie}. 

Now observe that the $L$-function $L(C;T)$ of the curve $C$ (the numerator of its zeta function) factors as the product of the $L$-functions associated to the character $\chi$ and the functions $g^k$, $1\leq k\leq d-1$. As a consequence, the $q$-adic Newton polygon of $L(C,T)$ (which is nothing but the Newton polygon of the Jacobian of $C$) is the concatenation of the Newton polygons of the multiplicative $L$-functions $L(\chi,g^k;T)$. Since the concatenation of the generic Newton polygons for these multiplicative $L$-functions is the $\mu$-ordinary polynomial for $\bgamma$, we get the result.

Note that some equivalents of Theorem \ref{main2} are known for $L$-functions associated to additive character sums, coming from a $p$-cyclic covering of the projective line. The generic Newton polygon is determined in \cite{bf} when the covering is ramified at one point, and the characteristic is large compared to the degree; for a general $p$-cover it is shown to admit a limit when $p$ tends to infinity in \cite{lizh}. But the generic Newton polygon remains mysterious in the general case: it is much more dependant on the characteristic than in the multiplicative case, where it depends only on the residue of $p$ modulo $d$ and $\bgamma$.

Let us say a word about our techniques: they are elementary, and rather old-fashioned. We first use a method dating back to Davenport and Hasse \cite{daha} in order to express the coefficients of the $L$-functions in terms of character sums over higher dimensional projective space. Once this is done, we write these last sums in terms of Gauss sums, exploiting ideas of Ax \cite{ax}, and we deduce the lower polygon from the valuations of these sums. This is done in Section \ref{sec1}. In order to study the terms in the expression that participate to the principal parts of the coefficients of the $L$-functions, we study the elements with minimal $p$-weight among the solutions of some congruences in Section \ref{sec2}. The last section is devoted to write these principal parts of the coefficients of the $L$-function, and to show that they are generically non zero with the help of the above results and Stickelberger's congruence \cite{stic}.

\section{A lower bound for Newton polygons of $L$-functions associated to multiplicative character sums}
\label{sec1}

In this section, we fix two integers $d\geq 2$, $N\geq 3$, and a vector $\a:=(a_1,\ldots,a_N)\in\{1,\ldots,d-1\}^N$ such that $d$ divides $A$, the sum of its coordinates. We set $\gamma:=(d,N,\a)$.

We choose a prime $p$ that does not divide $d$. Let $r$ denote the multiplicative order of $p$ in $(\Z/d\Z)^\times$.

Let $\F_q$ denote a finite field with $q=p^m\equiv 1\mod d$ and $q\geq N$; in particular $r$ divides $m$. Let $g\in \F_q[x]$ denote a polynomial of the form
\[
g(x)=\prod_{i=1}^N (x-\alpha_i)^{a_i},
\]
where $\alpha_1,\ldots,\alpha_N\in\F_q$ are pairwise distinct.

The field $\Q_p(\zeta_{q-1})$ is the unramified extension of degree $m$ of the field $\Q_p$ of $p$-adic numbers. The residue field of its ring of integers $\Z_p[\zeta_{q-1}]$ is $\F_q$, and we denote by $\omega$ the map on $\F_q^\times$ which is the section of reduction modulo $p$ whose image is the Teichmüller subgroup, ie the subgroup of $(q-1)$th roots of unity. We extend $\omega$ by $\omega(0)=0$. This is a generator of the group of multiplicative characters on $\F_q^\times$ taking their values in $\Q_p(\zeta_{q-1})$, and $\chi:=\omega^{\frac{q-1}{d}}$ is a generator of the group of multiplicative characters of order $d$, that we extend to $\F_q$ by setting $\chi(0):=0$. For any $k\geq 1$, we extend the character $\chi$ to the degree $k$ extension of $\F_q$, with the help of the norm: we set $\chi_k:=\chi\circ\No_{\F_{q^k}/\F_q}$ on $\F_{q^k}$. 

Let $G(x_0,x_1):=x_1^A g(x_0/x_1)$ denote the polynomial obtained by homogeneization of $g$. Since the degree of $G$ is a multiple of $d$, the value $\chi_k(G(x_0,x_1))$ does not depend on the choice of the representative $(x_0,x_1)$ of the point $(x_0:x_1)\in\P^1(\F_{q^k})$. Thus we can associate to $g$ and $\chi$ a family of character sums $(s_k)_{k\geq 1}$ over the projective line defined by (note that for the point at infinity, we have $G(1,0)=1$)
\[
s_k:=\sum_{(x_0:x_1)\in\P^1(\F_{q^k})}\chi_k(G(x_0,x_1))=1+\sum_{x\in\F_{q^k}}\chi_k(g(x))
\]
We define a $L$-fonction from this family
\[
L(g,\chi;T):= \exp\left(\sum_{k\geq 1} s_k\frac{T^k}{k}\right)
\]
The aim of this section is to give a lower bound for the Newton polygon (for the $q$-adic valuation) of this $L$-function. 

In order to do this, we first give an expression for its coefficients, in terms of character sums over higher dimensional projective spaces. Then we give a $p$-adic expression for these last sums, in order to lower bound their valuations. 

\subsection{A new expression for the coefficients of the $L$-function} 
 
We shall use the Hasse-Davenport method to rewrite the coefficients of the $L$-function. This section is largely inspired from \cite[Section 2]{kag2}.

First recall the expression of the $L$-function as an Eulerian product
\[
L(g,\chi;T)=\prod_{\p} \frac{1}{1-\chi_{\deg \p}(g(\p))T^{\deg(\p)}}
\]
where $\p$ runs over the places of $\F_q(x)$, ie over the monic irreducible polynomials in $\F_q[x]$ and the place $(1/x)$ corresponding to the place at infinity. Note that if $\pi$ is such a polynomial, corresponding to $\p$, we have set $\chi_{\deg \p}(g(\p))=\chi_{\deg \p}(g(\theta))$ for any root $\theta$ of $\pi$ in $\F_{q^{\deg \pi}}$. Note also that for $\p=(1/x)$ we have $\chi(g(\p))=1$ as we have already seen.

From this expression, we deduce that for any $n\geq 1$, the degree $n$ coefficient of the $L$-function can be written
\[
L_n=\sum_{(\p_1,\ldots,\p_k)}\prod_{i=1}^k \chi_{\deg \p_i}(g(\p_i))
\]
where the sum runs over all lists of places $(\p_1,\ldots,\p_k)$ with $\sum \deg\p_i=n$. Since we have $\chi(g(\p))=1$ for the place at infinity, its presence in the list does not change anything to the product, and we can replace the summation set by the set of lists of monic irreducible polynomials $(\pi_1,\ldots,\pi_k)$ with $\sum \deg\pi_i\leq n$. Thus we can rewrite
\[
L_n=\sum_{(\pi_1,\ldots,\pi_k)} \chi\left(\prod_{i=1}^k \No_{\F_{q^{\deg \pi_i}}/\F_q}(g(\theta_i))\right)=\sum_{(\pi_1,\ldots,\pi_k)} \chi\left(\prod_\theta g(\theta)\right)
\]
where the last product is over the roots of the polynomials $\pi_1,\ldots,\pi_k$ (note that if $\theta_i$ is a root of $\pi_i$, then its roots are the $\theta_i^{q^j}$ for $0\leq j\leq\deg\pi_i-1$ and we can replace the norm by the product over all roots).

From the unique factorization theorem, we deduce that the above sum can be indexed by the unitary polynomials of degree at most $n$. The polynomial (in the variable $t$) $x_0+x_1t+\ldots+x_nt^n$ corresponds to the point $(x_0:\ldots:x_n)$ of the projective space $\P^n(\F_q)$, where we have chosen the representatives whose last nonzero coordinate is $1$. Moreover, the product over the roots can be rewritten in terms of the coefficients of the polynomial
\[
\prod_\theta g(\theta)=\prod_{i=1}^N \prod_\theta (\theta-\alpha_i)^{a_i}=(-1)^{A}\prod_{i=1}^n h(\alpha_i)^{a_i}
\]
Summing up, we get the expression (note that since $d$ divides $\sum a_i$, the power of $-1$ disappears and the product does not depend on the chosen representative for a point of projective space)
\[
L_n=\sum_{x=(x_0:\ldots:x_n)\in\P^n(\F_q)} \prod_{i=1}^N \chi^{a_i}(\ell_i(x)),~ \ell_i(x):=x_0+\alpha_ix_1+\ldots+\alpha_i^nx_n
\]

\subsection{An expression of the coefficients in terms of Gauss sums}

In order to evaluate the divisibility of the coefficients, we apply some techniques that have already been used in such situations. First we transform them into mixed sums as in \cite{doll}. Then we use a slight modification of the method of Ax \cite{ax}.

It will be easier to evaluate a sum on affine space. Since we have set $\chi(0)=0$, we have the simple relation $(q-1)L_n=S_n$, where
\[
S_n=\sum_{x\in \F_q^{n+1}} \prod_{i=1}^N \chi^{a_i}(\ell_i(x))
\]

Recall that $\alpha_1,\ldots,\alpha_N$ are pairwise distinct. 

First assume that we have $n+1\geq N$; the family $\{\ell_1,\ldots,\ell_{N}\}$ of linear forms is free in the dual of $\F_q^{n+1}$; in this case the sum $S_n$ is zero from the orthogonality relation for multiplicative characters. We deduce the following well-known fact

\begin{lemma}
The $L$-function $L(g,\chi;T)$ is a polynomial of degree at most $N-2$.
\end{lemma}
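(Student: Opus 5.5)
The plan is to show that the coefficients $L_n$ vanish for every $n\geq N-1$, using the relation $(q-1)L_n=S_n$ together with the observation just made about linear independence. Since $L(g,\chi;T)=\sum_{n\geq 0}L_nT^n$ with $L_0=1$, this gives directly that $L$ is a polynomial of degree at most $N-2$.

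Fix $n\geq N-1$, so $n+1\geq N$. First I check that the linear forms $\ell_1,\ldots,\ell_N$ on $\F_q^{n+1}$ are linearly independent. Their coefficient vectors $(1,\alpha_i,\ldots,\alpha_i^n)$ form the rows of an $N\times(n+1)$ matrix. Its first $N$ columns give a Vandermonde matrix with determinant $\prod_{i<j}(\alpha_j-\alpha_i)\neq 0$, because the $\alpha_i$ are pairwise distinct.

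Next I complete $\ell_1,\ldots,\ell_N$ to a basis $\ell_1,\ldots,\ell_{n+1}$ of the dual space. The map $x\mapsto(\ell_1(x),\ldots,\ell_{n+1}(x))$ is then a linear bijection of $\F_q^{n+1}$. Changing variables in $S_n$ gives
\[
S_n=q^{\,n+1-N}\prod_{i=1}^N\Bigl(\sum_{y\in\F_q}\chi^{a_i}(y)\Bigr).
\]
Each $a_i$ lies in $\{1,\ldots,d-1\}$ and $\chi$ has order $d$, so $\chi^{a_i}$ is a nontrivial character (extended by $0$ at $0$). By orthogonality each inner sum is $0$, hence $S_n=0$. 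Since $q-1\neq 0$ in the characteristic-zero field $\Q_p(\zeta_{q-1})$, it follows that $L_n=0$.

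There is no real obstacle here. The only points needing care are the Vandermonde independence argument and the fact that every $\chi^{a_i}$ is nontrivial; the latter is exactly where the hypothesis $a_i\not\equiv 0\pmod d$ is used.
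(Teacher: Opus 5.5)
Your proposal is correct and follows the same route as the paper: for $n+1\geq N$ the forms $\ell_1,\ldots,\ell_N$ are linearly independent, so $S_n=(q-1)L_n$ vanishes by orthogonality of the nontrivial characters $\chi^{a_i}$. You simply make explicit the Vandermonde argument and the change of variables that the paper leaves implicit.
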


From now on, we assume that we have $n+1<N$. Then the family $\{\ell_1,\ldots,\ell_{n+1}\}$ is a basis for the dual of $\F_q^{n+1}$; we easily verify that for all $n+2\leq i\leq N$ we have 
\[
\ell_i=\sum_{j=1}^{n+1} \alpha_{ij}\ell_j,
\] 
where $\alpha_{ij}=\L_j(\alpha_i)$ is the value ar $\alpha_i$ of the $j$-th Lagrange interpolation polynomial associated to $\alpha_1,\ldots,\alpha_{n+1}$.

We can write the sum $S_n$ in the dual basis of $\{\ell_1,\ldots,\ell_{n+1}\}$; using the fact that the value at $0$ of a non trivial multiplicative character is zero, we get
\[
S_n=\sum_{y\in (\F_q^\times)^{n+1}} \prod_{j=1}^{n+1} \chi^{a_j}(y_j)\prod_{i=n+2}^N \chi^{a_i}(m_i(y)),~ m_i(y):=\sum_{j=1}^{n+1} \alpha_{ij} y_j
\]

We now transform the sum $S_n$ into a mixed character sum. We fix once and for all an element $\pi$ in $\Q_p(\zeta_p)$ such that $\pi^{p-1}=-p$. We know that there exists an unique $p$-root of unity with $\zeta_p\equiv 1+\pi\mod \pi^2$. It defines an additive character $\psi_0$ of $\F_p$ by $\psi_0(1)=\zeta_p$, and, composed with the absolute trace, an additive character $\psi$ of $\F_q$.

\begin{definition}
\label{gausssums}
For any integer $i\neq 0$, we define the Gauss sum on $\F_q$, associated to the multiplicative character $\omega^i$, as
\[
g(i):=\sum_{x\in \F_q}\psi(x)\omega^i(x)
\]
and we set as a convention $g(0):=-1$.
\end{definition}

For any $x\in\F_q$, and $i$ as above, we have the equality
\[
\omega^i(x)=g(-i)^{-1}\sum_{y\in \F_q^\times} \omega^{-i}(y)\psi(xy)
\]
as can be seen from the variable change $z=xy$ in the sum when $x\neq 0$, and from the orthogonality relation for multiplicative characters else.

Let $b_i:=\frac{q-1}{d}a_i$, $1\leq i\leq N$, so that $\chi^{a_i}=\omega^{b_i}$. If we replace the values $\chi^{a_i}(m_i(y))$ by the sums given in the preceding expression, and set $\b_n:=(b_1,\ldots,b_{n+1},q-1-b_{n+2},\ldots,q-1-b_N)$, $\omega^{\b_n}(y):=\prod_{j=1}^{n+1}\omega^{b_j}(y_j)\prod_{i=n+2}^{N}\omega^{-b_i}(y_i)$ for $y=(y_1,\ldots,y_N)$, then we can rewrite the sum
\begin{eqnarray*}
S_n & = & \left(\prod_{i=n+2}^N g(-b_i)\right)^{-1}\sum_{y\in(\F_q^\times)^{N}}\omega^{\b_n}(y)\prod_{i=n+2}^{N}\psi(y_im_i(y_1,\ldots,y_{n+1})) \\
& = & \left(\prod_{i=n+2}^N g(-b_i)\right)^{-1}\sum_{y\in(\F_q^\times)^{N}}\omega^{\b_n}(y)\prod_{i=n+2}^{N}\prod_{j=1}^{n+1}\psi(\alpha_{ij}y_iy_j) \\
\end{eqnarray*}

\begin{remark}
Note that the $b_i$ inherit some properties from those of the $a_i$; in particular none is divisible by $q-1$, but their sum is.
\end{remark}

We turn to our slight modification of Ax' method (see \cite[Section 6]{adsp}). We introduce a polynomial $C:=\sum_{k=0}^{q-2}c_kT^k\in \Q_p(\zeta_p,\zeta_{q-1})[T]$: it is the unique degree $q-2$ polynomial which takes the value $\psi(x)$ at $\omega(x)$ for any $x\in\F_q^\times$; in other words, it is the Lagrange interpolation polynomial for the additive character $\psi$ at the points of the Teichmüller subgroup of $\Q_p(\zeta_{q-1})$. We easily check that its coefficients can be written in terms of Gauss sums
\[
c_k=\frac{g(-k)}{q-1},~1\leq k\leq q-2,~c_0=\frac{1}{1-q}=\frac{g(0)}{q-1}
\] 
If we replace the values of the additive character, using the polynomial $C$, we get 
\[
S_n = \left(\prod_{i=n+2}^N g(-b_i)\right)^{-1}\sum_{y\in(\F_q^\times)^{N}}\omega^{\b_n}(y)\prod_{i=n+2}^{N}\prod_{j=1}^{n+1}\sum_{k=0}^{q-2}c_k\omega^k(\alpha_{ij}y_jy_i)
\]
We introduce the set $E_n(q):=\{0,\ldots,q-2\}^{(N-n-1)(n+1)}$, and write its elements in the form $U=(u_{ij})$, $1\leq j\leq n+1$, $n+2\leq i\leq N$. For any such $U$, we set
\[
\f_n(U):=\left(\sum_{i=n+2}^N u_{i1},\ldots,\sum_{i=n+2}^N u_{in+1},\sum_{j=1}^{n+1} u_{n+2j},\ldots,\sum_{j=1}^{n+1} u_{Nj}\right)
\]
Interchanging the internal sum with the products in the expression above, we get
\[
S_n = \left(\prod_{i=n+2}^N g(-b_i)\right)^{-1}\sum_{y\in(\F_q^\times)^{N}}\sum_{U\in E_n(q)}\omega^{\f_n(U)+\b_n}(y) \prod_{j=1}^{n+1}\prod_{i=n+2}^{N} c_{u_{ij}}\omega^{u_{ij}}(\alpha_{ij})
\]
Finally, if we exchange the two sums, and use the orthogonality relations on multiplicative characters, we see that since $\omega$ has order $q-1$, the term corresponding to $U$ above is zero, except when $q-1$ divides all the coordinates of the vector $\f_n(U)+\b_n$. We are led to introduce the set
\[
A_n(\gamma,q) := \left\{U\in E_n(q),~ \f_n(U)+\b_n\equiv 0\mod q-1\right\}
\]
and we get the expression
\begin{equation}
\label{exprSn}
S_n = (q-1)^N\left(\prod_{i=n+2}^N g(-b_i)\right)^{-1}\sum_{U\in A_n(\gamma,q)}\prod_{j=1}^{n+1}\prod_{i=n+2}^{N} c_{u_{ij}}\omega^{u_{ij}}(\alpha_{ij}) 
\end{equation}

Note that the set $A_n(\gamma,q)$ contains $(q-1)^{n(N-n-2)}$ elements. This is easily verified by expressing each $u_{Nj}$ in terms of the $u_{n+2j},\ldots,u_{N-1j}$ for $1\leq j\leq n+1$, and each $u_{in+1}$ in terms of the $u_{i1},\ldots,u_{in}$ for $n+2\leq i\leq N$. We get two expressions for the term $u_{Nn+1}$, which agree since the sum $\sum_{i=1}^N b_i$ is a multiple of $q-1$. Thus there exists exactly one solution for any given choice of $(u_{ij})$, $1\leq j\leq n$, $n+2\leq i\leq N-1$.

Note in particular that for $n=N-2$, there is a unique solution, namely \[
(u_{1N},\ldots,u_{N-1N})=(q-1-b_1,\ldots,q-1-b_{N-1})
\]
In other words, we have expressed the degree $N-2$ coefficient of the $L$-function as a product of Gauss sums (and other non zero terms). We get the (well-known)

\begin{lemma}
The $L$-function $L(g,\chi;T)$ is a degree $N-2$ polynomial.
\end{lemma}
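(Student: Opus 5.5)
We already know from the first Lemma of this section that $L(g,\chi;T)$ is a polynomial of degree at most $N-2$. So it suffices to show that its coefficient $L_{N-2}$ is nonzero, using formula \eqref{exprSn} with $n=N-2$ together with the relation $(q-1)L_n=S_n$.

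First we make the index set explicit. For $n=N-2$ there is a single index $i=N$, and $j$ runs over $1,\ldots,N-1$. The vector $\f_{N-2}(U)$ then has the form
\[
\f_{N-2}(U)=\left(u_{N1},\ldots,u_{N\,N-1},\sum_{j=1}^{N-1}u_{Nj}\right),
\]
while $\b_{N-2}=(b_1,\ldots,b_{N-1},q-1-b_N)$. The first $N-1$ congruences force $u_{Nj}\equiv -b_j \mod q-1$. Since $0\leq u_{Nj}\leq q-2$ and $b_j\not\equiv 0\mod q-1$ (by the Remark above), this gives $u_{Nj}=q-1-b_j$. The last congruence reads $\sum_j(q-1-b_j)+q-1-b_N\equiv -\sum_{i=1}^N b_i\equiv 0$, which holds because $q-1$ divides $\sum b_i$. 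Hence $A_{N-2}(\gamma,q)$ is this single point, as the count $(q-1)^{n(N-n-2)}=1$ also predicts.

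Substituting into \eqref{exprSn} gives
\[
L_{N-2}=(q-1)^{N-1}\,g(-b_N)^{-1}\prod_{j=1}^{N-1}c_{q-1-b_j}\,\omega^{q-1-b_j}(\alpha_{Nj}),
\]
where $c_{q-1-b_j}=g(b_j-q+1)/(q-1)=g(b_j)/(q-1)$, because Gauss sums depend only on the index modulo $q-1$. It remains to check that each factor is nonzero.

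\begin{itemize}
\item \emph{Gauss sums.} For $b\not\equiv 0\mod q-1$, the character $\omega^b$ is nontrivial, so $g(b)\overline{g(b)}=q$ and $g(b)\neq 0$. Alternatively, Stickelberger's theorem gives a finite valuation. This covers $g(b_j)$ and also $g(-b_N)$, which we must invert.
\item \emph{Lagrange values.} We need $\alpha_{Nj}=\L_j(\alpha_N)\neq 0$, where $\L_j$ is the Lagrange polynomial for $\alpha_1,\ldots,\alpha_{N-1}$. Its roots are exactly the $\alpha_k$ with $k\neq j$, $k\leq N-1$, and $\alpha_N$ is distinct from all of these. Hence $\L_j(\alpha_N)\neq 0$, so $\omega^{q-1-b_j}(\alpha_{Nj})$ is a root of unity and in particular nonzero.
\end{itemize}

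Therefore $L_{N-2}\neq 0$, and the degree of $L(g,\chi;T)$ is exactly $N-2$. The only step needing care is pinning down the unique element of $A_{N-2}(\gamma,q)$ and checking that none of the $b_j$ is $0$ modulo $q-1$. Everything else is immediate.
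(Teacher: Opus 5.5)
Your proof is correct and is the paper's argument. You combine the earlier bound $\deg L\le N-2$ with the fact that $A_{N-2}(\gamma,q)$ reduces to the single point $u_{Nj}=q-1-b_j$, so that $L_{N-2}$ is a product of nonvanishing Gauss sums and Teichm\"uller values. You also make explicit the checks the paper leaves implicit: that $0<b_j<q-1$ forces this solution, that $\alpha_{Nj}=\L_j(\alpha_N)\neq 0$, and that $g(b)\neq 0$ for $b\not\equiv 0 \bmod q-1$.
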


\subsection{A lower bound for the Newton polygon}

Here we use the expression (\ref{exprSn}) and a well-known result on the valuation of Gauss sums, to give a lower bound for the valuations of the coefficients of the $L$-function.

Recall that we have chosen a generator of the maximal ideal of the ring $\Z_p[\zeta_p,\zeta_{q-1}]$ as $\pi=\psi(1)-1$. In this case, for any integer $1\leq i\leq q-2$, we now from a theorem of Stickelberger that the $\pi$-adic valuation of the Gauss sum $g(-i)$ is $s_p(i)$, the sum of the digits of the base $p$ expansion of $i$.

We deduce that for all $0\leq u\leq q-2$, the degree $u$ coefficient of the polynomial $C$ has $\pi$-adic valuation $s_p(i)$, and the minoration
\[
v_{\pi}(S_n)\geq \min\left\{s_p(U)-\sum_{i=n+2}^N s_p(b_i),~ U\in A_n(\gamma,q)\right\},~ s_p(U):=\sum_{i,j} s_p(u_{ij})
\]
We are ready to give a lower bound for the valuation of $S_n$. In order to do this, we need a definition

\begin{definition}
The \emph{$p$-signature} of $\gamma=(d,N,\a)$ is the function \[\sigma:\N\rightarrow \{0,\ldots,N-2\},~
\sigma(t):=-1+\sum_{k=1}^N \left\langle\frac{p^ta_k}{d}\right\rangle
\]
where $\langle x\rangle$ is the fractional part of the rational number $x$.
\end{definition}

Note that it is a periodic function, with period $r$. With this at hand, we have

\begin{proposition}
\label{minoval}
For any $1\leq n\leq N-2$, the valuation of the degree $n$ coefficient of the $L$-function $L(g,\chi;T)$ satisfies
\[
v_{\pi}(L_n)\geq (p-1)\sum_{t=0}^{m-1}\max\{0,n-\sigma(t)\}=\frac{(p-1)m}{r}\sum_{t=0}^{m-1}\max\{0,n-\sigma(t)\}
\]
and this is an equality for $n=N-2$.
\end{proposition}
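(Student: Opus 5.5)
The plan is to bound the right-hand side of the minoration $v_\pi(S_n)\geq\min\{s_p(U)-\sum_{i\geq n+2}s_p(b_i)\}$ one $p$-adic digit position at a time, and then to pass from $S_n$ to $L_n$. The passage is immediate. Since $(q-1)L_n=S_n$ and $q-1$ is a $p$-adic unit, $v_\pi(L_n)=v_\pi(S_n)$. The factors $\omega^{u_{ij}}(\alpha_{ij})$ are roots of unity; note that $\alpha_{ij}\neq 0$, because $\mathcal L_j(\alpha_i)=0$ would force $\alpha_i$ to be one of $\alpha_1,\ldots,\alpha_{n+1}$. The prefactor $(q-1)^N$ is also a unit. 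So the lower bound for $v_\pi(S_n)$ from the minoration above is all we need.

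To evaluate digit sums I will use the identity, valid for $0\leq u\leq q-2$,
\[
s_p(u)=(p-1)\sum_{t=0}^{m-1}\left\langle \frac{p^tu}{q-1}\right\rangle .
\]
This reduces the problem to a fractional-part optimisation for each fixed $t$.

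Fix $U\in A_n(\gamma,q)$ and $t$, and set $x_{ij}:=\langle p^tu_{ij}/(q-1)\rangle\in[0,1)$ and $c_k:=\langle p^ta_k/d\rangle=\langle p^tb_k/(q-1)\rangle$. Since $p\nmid d$ and $1\leq a_k\leq d-1$, we have $c_k\in(0,1)$. Multiplying the congruences defining $A_n(\gamma,q)$ by $p^t$ and dividing by $q-1$ gives two families of constraints.
\begin{itemize}
\item For the columns $j\leq n+1$: $\sum_i x_{ij}\equiv -c_j \bmod 1$. As the left side is nonnegative, $\sum_i x_{ij}\geq 1-c_j$.
\item For the rows $i\geq n+2$ (where the $\b_n$-entry is $q-1-b_i$): $\sum_j x_{ij}\equiv c_i\bmod 1$, hence $\sum_j x_{ij}\geq c_i$.
\end{itemize}
Summing the entries of the matrix $(x_{ij})$ in both ways gives
\[
\sum_{i,j}x_{ij}-\sum_{i\geq n+2}c_i\;\geq\;\max\Bigl\{0,\;n+1-\sum_{k=1}^N c_k\Bigr\}=\max\{0,n-\sigma(t)\}.
\]
Multiplying by $p-1$ and summing over $t$ then yields the stated lower bound for every $U$, hence for $v_\pi(L_n)$. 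The expression with $m/r$ comes from the $r$-periodicity of $\sigma$, which lets one regroup the sum over $t$ into $m/r$ blocks of length $r$.

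For equality at $n=N-2$, I use the fact already noted that $A_{N-2}(\gamma,q)$ is the singleton $u_{Nj}=q-1-b_j$, $1\leq j\leq N-1$. Thus $S_{N-2}$ is, up to units, a single product of coefficients $c_u$ of $C$ divided by $g(-b_N)$. Its valuation is therefore exactly given by Stickelberger's theorem, with no cancellation possible. In the computation above there is only one row, and each column has a single entry $x_{Nj}=1-c_j$. This gives $\sum_{t}(N-1-\sum_{j<N}c_j-c_N)=\sum_t(N-2-\sigma(t))$. This is nonnegative because $\sigma\leq N-2$, so it equals $\sum_t\max\{0,N-2-\sigma(t)\}$.

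The only delicate point is bookkeeping. One must check carefully the sign conventions between $\b_n$ and the congruences, and the case $u_{ij}=0$, where $c_0$ is a unit, consistent with $s_p(0)=0$. One must also make sure that the identity for $s_p$ applies for every entry with $0\leq u\leq q-2$. The main obstacle, namely that the lower bound is actually attained in general through a single dominant term, is not needed here. It belongs to the later sections.
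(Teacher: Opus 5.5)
Your proposal is correct and follows the paper's proof essentially step for step. It uses the digit-sum identity $s_p(u)=(p-1)\sum_t\langle p^tu/(q-1)\rangle$, together with the column bound $\sum_i x_{ij}\geq 1-c_j$ and the row bound $\sum_j x_{ij}\geq c_i$ (the paper's (\ref{ineq1}) and (\ref{ineq2})) summed over $t$; for $n=N-2$ it uses the unique solution $u_{Nj}=q-1-b_j$, which makes $S_{N-2}$ a single term of exact Stickelberger valuation. You also rightly read the second expression as $\frac{(p-1)m}{r}\sum_{t=0}^{r-1}$ (the upper limit $m-1$ in the statement is a typo), and your explicit check that $\alpha_{ij}\neq 0$ fills in a point the paper leaves implicit.
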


\begin{proof}
It is equivalent to work with the sum $S_n$, which has the same valuation.

From the expression above, it is sufficient to give a lower bound on the sums $s_p(U)$ for any $U\in A_n(\gamma,q)$.

Recall the following expression for the sum of base $p$ digits: for any $0\leq k\leq q-2$, we have
\[
s_p(k)=(p-1)\sum_{t=0}^{m-1} \left\langle\frac{p^t k}{q-1}\right\rangle
\]
We deduce the following
\[
s_p(U)=(p-1)\sum_{t=0}^{m-1} \sum_{i=n+2}^N \sum_{j=1}^{n+1} \left\langle\frac{p^t u_{ij}}{q-1}\right\rangle
\]
First fix some $1\leq j\leq n+1$; from  the definition of $A_n(\gamma,q)$, we know that $\sum_{i=n+2}^N p^tu_{ij}+p^tb_j$ is a positive multiple of $q-1$ for any $t$, and since we have $0<b_j<q-1$, we deduce the lower bound
\begin{equation}
\label{ineq1}
   \sum_{i=n+2}^N \left\langle\frac{p^t u_{ij}}{q-1}\right\rangle\geq 1-\left\langle\frac{p^t b_j}{q-1}\right\rangle 
\end{equation}

We get a first inequality (note that $\frac{a_i}{d}=\frac{b_i}{q-1}$ in order to use the $p$-signature $\sigma$)
\[
\sum_{j=1}^{n+1} \sum_{i=n+2}^N  \left\langle\frac{p^t u_{ij}}{q-1}\right\rangle\geq  n+1-\sum_{j=1}^{n+1} \left\langle\frac{p^t b_j}{q-1}\right\rangle=n-\sigma(t)+\sum_{i=n+2}^N \left\langle\frac{p^t b_i}{q-1}\right\rangle
\]
Now fix some $n+2\leq i\leq N$; then $\sum_{j=1}^{n+1} p^tu_{ij}+p^t(q-1-b_i)$ is a positive multiple of $q-1$ for any $t$, and we have (note that for any rational which is not an integer, we have $\langle -x\rangle=1-\langle x\rangle$)
\begin{equation}
\label{ineq2}
\sum_{j=1}^{n+1} \left\langle\frac{p^t u_{ij}}{q-1}\right\rangle+1-\left\langle\frac{p^t b_i}{q-1}\right\rangle\geq 1
\end{equation}
We get a second inequality
\[
\sum_{i=n+2}^N \sum_{j=1}^{n+1}   \left\langle\frac{p^t u_{ij}}{q-1}\right\rangle\geq  \sum_{i=n+2}^N \left\langle\frac{p^t b_i}{q-1}\right\rangle
\]
and, summing all these inequalities when $t$ varies, the minoration valid for any $U\in A_n(\gamma,q)$
\begin{eqnarray*}
s_p(U) & \geq & (p-1)\sum_{t=0}^{m-1} \left(\max\{0,n-\sigma(t)\}+\sum_{i=n+2}^N \left\langle\frac{p^t b_i}{q-1}\right\rangle\right)\\
& \geq & \sum_{i=n+2}^N s_p(b_i)+(p-1)\sum_{t=0}^{m-1} \max\{0,n-\sigma(t)\}
\end{eqnarray*}

This proves the inequality in the formula asserted in the proposition; the equality after it follows immediately from the $r$-periodicity of $\sigma$.

It remains to prove that we have an equality when $n=N-2$. In this case, we have already described the unique element of $A_n(\gamma,q)$, that is $(u_{1N},\ldots,u_{N-1N})=(q-1-b_1,\ldots,q-1-b_{N-1})$. Thus the first inequality in the proof above is an equality, and since our expression for the coefficient $L_{N-2}$ contains an unique term, both must have the same valuation.
\end{proof}

We are ready to define a combinatorial polygon (see the beginning of \cite[Section 5]{doll} for an equivalent definition)

\begin{definition}
\label{defpoly}
Let $\gamma,p$ be as above. If $e$ is the residue of $p$ modulo $d$, we denote by $\Pi(\gamma,e)$ the \emph{$\mu$-ordinary polygon} associated to this data. It is the graph of the continuous piecewise affine function $B$ such that $f(0)=0$ and for $1\leq n\leq N-2$, its slope over the interval $[n-1,n]$ is 
\[
\lambda_n=\frac{1}{r}\#\{0\leq i\leq r-1,~ \sigma(i)\leq n-1\}
\]
We denote by $\Sigma(\gamma,e)=\{\pi_1,\ldots,\pi_k\}$ the set of its vertices except the endpoints, where we write $\pi_i=(n_i,B(n_i))$.
\end{definition}

\begin{remark}
Let $1\leq n\leq N-3$; the point of abscissa $n$ is a vertex of this polygon if and only if the slopes over the intervals $[n-1,n]$ and $[n,n+1]$ are different, if and only if the map $\sigma$ takes the value $n$. As a consequence, the cardinality of $\Sigma(\gamma,e)$ is the number of values taken by $\sigma$ in $\{0,\ldots,N-3\}$, and the number of slopes is one more.
\end{remark}

\begin{remark}
\label{polygonorbit}
If we set $\gamma'=(d,N,p^k\a)$ (where the coordinates of $p^k\a$ are reduced modulo $d$), the new $p$-signature $\sigma'$ satisfies $\sigma'(t)=\sigma(t+k)$. In particular, they take the same values, and the associated Newton polygons are the same.

The union of these polygons when $k$ varies is the polygon denoted by $\mu(\mathfrak{o})$ in \cite[Proposition 4.3]{lmpt} when $\mathfrak{o}$ is the orbit of $1$ by multiplication by $p$ in $\Z/d\Z$, a piece of the $\mu$-ordinary polygon attached to the monodromy datum $\gamma$.
\end{remark}

This polygon is our lower bound

\begin{corollary}
\label{lowerbound}
Let $g\in \F_q[x]$ denote a polynomial as above. The Newton polygon of the associated $L$-function $L(g,\chi;T)$ for the $q$-adic valuation lies above the polygon $\Pi(\gamma,e)$, and their endpoints coincide.
\end{corollary}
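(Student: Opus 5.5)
We deduce the corollary from Proposition \ref{minoval} by converting $\pi$-adic valuations into $q$-adic ones and comparing with the function $B$ of Definition \ref{defpoly}. Since $v_\pi(p)=p-1$ and $q=p^m$, we have $v_q=v_\pi/((p-1)m)$. With $L_0=1$, the Newton polygon of $L(g,\chi;T)$ is the lower convex hull of the points $(n,v_q(L_n))$ for $0\le n\le N-2$. A lower convex hull lies above any convex function that lies below all of these points. So it suffices to check two things: $v_q(L_n)\ge B(n)$ for every $n$, and $B$ is convex. Equality at $n=0$ and $n=N-2$ then gives the coincidence of the endpoints.

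\textbf{Step 1: reformulate the bound.} Proposition \ref{minoval} gives
\[
v_q(L_n)\geq \frac{1}{m}\sum_{t=0}^{m-1}\max\{0,n-\sigma(t)\}=\frac{1}{r}\sum_{t=0}^{r-1}\max\{0,n-\sigma(t)\},
\]
using $r$-periodicity of $\sigma$ and $r\mid m$. Set $F(n):=\frac1r\sum_{t=0}^{r-1}\max\{0,n-\sigma(t)\}$. Then $F(0)=0$, since $\sigma\ge 0$.

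\textbf{Step 2: identify $F$ with $B$.} For integers $n\ge1$ and integer values $\sigma(t)$, we have $\max\{0,n-\sigma(t)\}-\max\{0,n-1-\sigma(t)\}=1$ if $\sigma(t)\le n-1$, and $0$ otherwise. Hence $F(n)-F(n-1)=\frac1r\#\{0\le t\le r-1,\ \sigma(t)\le n-1\}=\lambda_n$. So $F$ agrees with $B$ at all integers $0\le n\le N-2$. The slopes $\lambda_n$ are nondecreasing in $n$, so $B$ is convex. Together with Step 1, this gives $v_q(L_n)\ge B(n)$ for every $n$.

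\textbf{Step 3: endpoints.} At $n=0$ both values are $0$. At $n=N-2$, Proposition \ref{minoval} gives equality, so $v_q(L_{N-2})=F(N-2)=B(N-2)$. Both polygons therefore start at $(0,0)$ and end at the same point. Since the convex function $B$ lies below every point $(n,v_q(L_n))$, it lies below their lower convex hull, which is the Newton polygon; this proves the corollary.

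The only point needing care is Step 2, namely that the slopes are nondecreasing and that the interpolation between integers is consistent. Both are immediate from the counting formula, so I do not expect a real obstacle.
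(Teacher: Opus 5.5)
Your proposal is correct and follows the paper's proof. You convert $v_\pi$ to $v_q$ via $v_\pi=m(p-1)v_q$, use $r$-periodicity of $\sigma$ to rewrite the bound of Proposition \ref{minoval} as $\frac1r\sum_{t=0}^{r-1}\max\{0,n-\sigma(t)\}$, identify this with $B(n)$, and use the equalities at $n=0$ and $n=N-2$ for the endpoints. The differences are minor: you identify $F$ with $B$ through successive differences rather than the paper's rearrangement of the sum, and you state explicitly the convexity of $B$ (nondecreasing $\lambda_n$). That convexity is what lets you pass from the pointwise bounds $v_q(L_n)\geq B(n)$ to the Newton polygon, and the paper leaves it implicit.
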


\begin{proof}
First note that the two valuations are linked by $v_\pi=m(p-1)v_q$. 

From the above proposition, we have the inequality $v_q(L_n)\geq \frac{1}{r}\sum_{t=0}^{r-1}\max\{0,n-\sigma(t)\}$ valid for any $1\leq n\leq N-2$, and it is an equality when $n=N-2$. For $n=0$, the constant coefficient of the $L$-function is $1$, and both polygons start at the origin.

In order to conclude, we just have to remark
\begin{eqnarray*}
\sum_{t=0}^{r-1}\max\{0,n-\sigma(t)\}
& = & \sum_{k=0}^{n-1}(n-k)\#\{0\leq i\leq r-1,~\sigma(i)=k\}\\
& = & \sum_{k=0}^{n-1}\#\{0\leq i\leq r-1,~\sigma(i)\leq k\} = r\sum_{k=1}^{n} \lambda_k\\ 
\end{eqnarray*}
\end{proof}

\begin{remark}
This result is already known: except for the coincidence of the end-points, it is the simplest case of \cite[Theorem 5.1]{doll}.

We have decided to give a new proof since it provides most of the tools that we shall use in order to prove the more precise result that we need: the lower bound $\Pi(\gamma,e)$ is generically tight.  
\end{remark}

\section{Minimal solutions}
\label{sec2}

This section deals with the sets $\overline{A}_n(\gamma,q)$ consisting of the solutions of the system of congruences (slightly modified from the preceding section), and with the subsets of minimal solutions. We shall mainly show two results, both assuming that the map $\sigma$ takes the value $n$: the set $M_n(\gamma,q)$ is non-empty, and it can be described from the set $M_n(\gamma,p^r)$.

We first recall the notations introduced in the preceding section. We fix an integer $d\geq 2$, a prime $p$ not dividing $d$, and a power $q=p^m\equiv 1 \mod d$; we denote by $r$ the multiplicative order of $p$ modulo $d$, so that $m$ is a multiple of $r$. Let $\a:=(a_1,\ldots,a_N)\in\{1,\ldots,d-1\}^N$ be such that $d$ divides the sum $\sum_k a_k$. For $1\leq k\leq N$, let $b_k=\frac{q-1}{d}a_k$.

For any $1\leq n\leq N-2$, we define a map on $\overline{E}_n(q):=\{0,\ldots,q-1\}^{(N-n-1)(n+1)}$
\[
\f_n(U):=\left(\sum_{i=n+2}^N u_{i1},\ldots,\sum_{i=n+2}^N u_{in+1},\sum_{j=1}^{n+1} u_{n+2j},\ldots,\sum_{j=1}^{n+1} u_{Nj}\right)
\]
We set $\b_n:=(b_1,\ldots,b_{n+1},q-1-b_{n+2},\ldots,q-1-b_N)$ and consider the set of solutions of the system of the following $N$ congruences
\[
\overline{A}_n(\gamma,q) := \left\{U\in \overline{E}_n(q),~ \f_n(U)+\b_n\equiv 0\mod q-1\right\}
\]

We will focus on those having minimal $p$-weight $s_p(U)=\sum_{i,j} s_p(u_{ij})$.

\begin{definition}
The \emph{minimal solutions} are the solutions $U\in \overline{A}_n(\gamma,q)$ such that 
\[
s_p(U)= \sum_{i=n+2}^N s_p(b_i)+(p-1)\sum_{t=0}^{m-1}\max\{0,n-\sigma(t)\}
\]
We denote by $M_n(\gamma,q)$ the set of minimal solutions.
\end{definition}

\subsection{Characterizations of minimal solutions}

We give a series of properties of minimal solutions, that we shall use to prove our main results.

\begin{lemma}
\label{noq-1}
We have the inclusion $M_n(\gamma,q)\subset \{0,\ldots,q-2\}^{(N-n-1)(n+1)}$
\end{lemma}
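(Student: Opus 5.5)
Suppose $U\in M_n(\gamma,q)$ had some entry $u_{i_0j_0}=q-1$. The plan is to build $U'$ from $U$ by replacing this entry with $0$, and to reach a contradiction by comparing $p$-weights. Since $0\equiv q-1 \mod q-1$, the vector $\f_n(U')+\b_n$ is congruent to $\f_n(U)+\b_n$ modulo $q-1$. Hence $U'\in\overline{A}_n(\gamma,q)$, and in fact $U'\in A_n(\gamma,q)$ if $U$ has no other entry equal to $q-1$. On the other hand $s_p(q-1)=m(p-1)$ and $s_p(0)=0$, so
\[
s_p(U')=s_p(U)-m(p-1).
\]

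Next we need the lower bound from the proof of Proposition \ref{minoval}, valid on all of $\overline{A}_n(\gamma,q)$ and not only on $A_n(\gamma,q)$. That proof uses only the digit-sum formula $s_p(k)=(p-1)\sum_t\langle p^tk/(q-1)\rangle$ and the inequalities (\ref{ineq1}) and (\ref{ineq2}). For an entry equal to $q-1$ the fractional parts all vanish, so the formula undercounts: it gives $0$ instead of $m(p-1)$. This does no harm, because the lower bound is still true for the quantity
\[
\tilde s_p(U):=(p-1)\sum_t\sum_{i,j}\langle p^tu_{ij}/(q-1)\rangle\le s_p(U).
\]
Inequalities (\ref{ineq1}) and (\ref{ineq2}) remain valid: each relevant column or row sum, plus $p^tb_j$ (resp.\ $p^t(q-1-b_i)$), is still a multiple of $q-1$, and it is positive because $b_j$ is not divisible by $q-1$. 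So every $V\in\overline{A}_n(\gamma,q)$ satisfies
\[
s_p(V)\ge\tilde s_p(V)\ge \sum_{i=n+2}^N s_p(b_i)+(p-1)\sum_{t=0}^{m-1}\max\{0,n-\sigma(t)\},
\]
where the right-hand side is exactly the minimal value in the definition of $M_n(\gamma,q)$.

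Applying this bound to $V=U'$ gives $s_p(U')\ge s_p(U)$, while the first step gives $s_p(U')=s_p(U)-m(p-1)<s_p(U)$. This contradiction shows that no entry of a minimal solution equals $q-1$.

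The only step needing care is checking that the lower bound of Proposition \ref{minoval} extends to $\overline{A}_n(\gamma,q)$ with entries equal to $q-1$ allowed. This follows from the observation above that $\tilde s_p$ treats the value $q-1$ exactly as it treats $0$, so the computation in that proof goes through unchanged for $\tilde s_p$, and $\tilde s_p\le s_p$.
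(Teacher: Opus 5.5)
Your proof is correct and follows the same route as the paper: replace an entry $q-1$ by $0$, note that the result stays in $\overline{A}_n(\gamma,q)$ with $p$-weight lowered by $m(p-1)$, and contradict minimality. Your check that the lower bound of Proposition \ref{minoval} holds on all of $\overline{A}_n(\gamma,q)$, via $\tilde s_p\le s_p$, supplies a step the paper leaves implicit when it asserts that a minimal solution is one of minimal $p$-weight.
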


\begin{proof}
We see from the proof of Proposition \ref{minoval} that a minimal solution is one with minimal $p$-weight $s_p(U)$. If $U$ was such a solution, with some $u_{ij}=q-1$, we could replace it by $0$ and still have a solution, but with strictly lower weight, contradicting the minimality.
\end{proof}

We introduce some objects, that are already used in \cite{bla},  \cite{phig}.

\begin{definition}
The \emph{shift} $\delta$ from $\{0,\ldots,q-1\}$ to itself, sends any integer $u$ to the remainder of the euclidean division of $pu$ by $q-1$, and fiwes $q-1$. 

We extend it coordinatewise to any vector of elements of $\{0,\ldots,q-2\}$ 
\end{definition}

Note that $\delta$ permutes cyclically the digits of the base $p$ expansion, and that $\delta^m$ is the identity; as a consequence, it preserves the $p$-weight. It sends $\overline{A}_n(\gamma,q)$ to $\overline{A}_n(\delta\gamma,q)$, and $M_n(\gamma,q)$ to $M_n(\delta\gamma,q)$. Moreover, from its very definition, we have
\[
\forall t\in\{0,\ldots,m-1\},~ u\in\{0,\ldots,q-2\}, ~\left\langle\frac{p^t u}{q-1}\right\rangle
=\frac{\delta^t u}{q-1}
\]

Here is a first characterization for minimal solutions.

\begin{lemma}
\label{charmin}
A solution $U\in A_n(\gamma,q)$ is minimal if and only if we have for all $t\in\{0,\ldots,m-1\}$, the equalities
\[
\left\{
\begin{array}{llllll}
\forall j\in \{1,\ldots,n+1\},& \sum_{i=n+2}^{N}\delta^t u_{ij} & = & q-1-\delta^t b_j & {\rm if} & \sigma(t)\leq n \\
\forall i\in \{n+2,\ldots,N\},& \sum_{j=1}^{n+1}\delta^t u_{ij} & = & \delta^t b_i & {\rm if} & \sigma(t)\geq n  \\
\end{array}
\right.
\]
\end{lemma}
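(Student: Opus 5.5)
We revisit the proof of Proposition \ref{minoval} and track exactly when each inequality is an equality. Throughout we use $\langle p^t u/(q-1)\rangle=\delta^t u/(q-1)$ for $0\leq u\leq q-2$, which holds since $U\in A_n(\gamma,q)$ has entries in $\{0,\ldots,q-2\}$, and $\delta^t b_k\neq 0$ because $q-1\nmid p^tb_k$. For each $t$ we set
\[
X_t:=\sum_{i,j}\frac{\delta^t u_{ij}}{q-1},\qquad
P_t:=\sum_{j=1}^{n+1}\left(1-\frac{\delta^t b_j}{q-1}\right),\qquad
Q_t:=\sum_{i=n+2}^N \frac{\delta^t b_i}{q-1}.
\]
Then $s_p(U)=(p-1)\sum_t X_t$. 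Since $\sum_{k=1}^N \delta^t b_k/(q-1)=\sigma(t)+1$, we have $P_t=n-\sigma(t)+Q_t$. Inequality (\ref{ineq1}), summed over $j$, gives $X_t\geq P_t$, and (\ref{ineq2}), summed over $i$, gives $X_t\geq Q_t$. Hence
\[
X_t\geq \max\{P_t,Q_t\}=Q_t+\max\{0,n-\sigma(t)\}.
\]
Summing over $t$ shows that $U$ is minimal if and only if $X_t=\max\{P_t,Q_t\}$ for every $t$. The lemma therefore reduces to a pointwise statement for each fixed $t$.

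Fix $t$. The $j$-th row condition says that $R_j:=\sum_i\delta^t u_{ij}+\delta^t b_j$ is a positive multiple of $q-1$; it is positive because $\delta^t b_j>0$. Hence $R_j\geq q-1$, with equality exactly when the $j$-th equation of the lemma holds. Similarly, the $i$-th column sum $C_i:=\sum_j \delta^t u_{ij}+(q-1-\delta^t b_i)$ is a positive multiple of $q-1$, since $q-1-\delta^t b_i>0$. Hence $C_i\geq q-1$, with equality exactly when the $i$-th equation holds. Summing gives
\[
(q-1)(X_t-P_t)=\sum_j\bigl(R_j-(q-1)\bigr),\qquad (q-1)(X_t-Q_t)=\sum_i\bigl(C_i-(q-1)\bigr).
\]
So $X_t=P_t$ if and only if all the $j$-equations hold, and $X_t=Q_t$ if and only if all the $i$-equations hold.

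Now compare with $\max\{P_t,Q_t\}$ according to the sign of $n-\sigma(t)$. If $\sigma(t)<n$, then $P_t>Q_t$, and $X_t=\max$ is equivalent to $X_t=P_t$, that is, the first family of equations. In this case the second family must fail, because $X_t=P_t>Q_t$, which is consistent with the lemma demanding only the first family. If $\sigma(t)>n$, the situation is symmetric and only the second family is required. If $\sigma(t)=n$, then $P_t=Q_t=X_t$, so both families hold simultaneously. This matches the statement exactly, including the overlap at $\sigma(t)=n$.

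The converse direction is immediate from the same identities: if the stated equations hold for each $t$, then $X_t=\max\{P_t,Q_t\}$ for every $t$, and summing shows that the weight is the minimal one. The only delicate point is the positivity of the multiples, which rests on $0<\delta^t b_k<q-1$, and the identification of fractional parts with $\delta^t$. Both have already been checked above.
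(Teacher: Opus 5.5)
Your proof is correct and follows the paper's route. Like the paper, you reduce minimality to the termwise equality $X_t=\max\{P_t,Q_t\}$ extracted from the proof of Proposition \ref{minoval}, then read off whether the inequalities (\ref{ineq1}) or (\ref{ineq2}) must be equalities from the sign of $n-\sigma(t)$; your defect identities for $(q-1)(X_t-P_t)$ and $(q-1)(X_t-Q_t)$, with the positivity check $0<\delta^t b_k<q-1$, make explicit the steps the paper leaves implicit.
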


\begin{proof}
Recall from the proof of Proposition \ref{minoval} that a solution $U$ is minimal if, and only if, for all $0\leq t\leq m-1$, we have the equality
\[
\sum_{i=n+2}^{N}\sum_{j=1}^{n+1}\frac{\delta^t u_{ij}}{q-1}=\max\left\{n+1-\sum_{j=1}^{n+1}\frac{\delta^t b_j}{q-1},\sum_{i=n+2}^{N}\frac{\delta^t b_i}{q-1}\right\}
\]
First note that the above maximum is the first term when $\sigma(t)\leq n$, and the second when $\sigma(t)\geq n$.

If $\sigma(t)\leq n$, then the $n+1$ inequalities (\ref{ineq1}) must be equalities. If $\sigma(t)\geq n$, then the $N-n-1$ inequalities (\ref{ineq2}) must be equalities. Thus the equalities above are satisfied by a minimal solution.

The converse follows easily from the expression
\[
s_p(U)=\sum_{j=1}^{n+1}\sum_{i=n+2}^{N}s_p(u_{ij})=(p-1)\sum_{j=1}^{n+1}\sum_{i=n+2}^{N}\sum_{t=0}^{m-1}\frac{\delta^t u_{ij}}{q-1}
\]
\end{proof}

In order to give a second characterisation, we follow \cite{phig}, and associate a family of vectors with positive integer coordinates to any solution. 

\begin{definition}
Let $U\in A_n(\gamma,q)$; its \emph{support} is the map from $\Z/m\Z$ to $\N^N$ defined by
\[
\varphi_U(t):=\frac{1}{q-1}(\f_n(\delta^t U)+\delta^t\b_n)
\]
\end{definition}

We also need some sets that will describe the supports of minimal solutions

\begin{definition}
Let $0\leq t\leq m-1$; we define the set $M_t$ by
\begin{itemize}
    \item[(i)] if $\sigma(t)\leq n$, then
 \[
 M_t:=
\left\{(x_1,\ldots,x_N)\in \N^N,~ x_1=\ldots=x_{n+1}=1,~ \sum_{i=n+2}^N x_i=N-\sigma(t)-1\right\}
\]
    \item[(ii)] if $\sigma(t)\geq n$, then
 \[
 M_t:=
\left\{(x_1,\ldots,x_N)\in \N^N,~ \sum_{j=1}^{n+1} x_j=\sigma(t)+1,~ x_{n+2}=\ldots=x_{N}=1\right\}
\]
\end{itemize}
\end{definition}

\begin{remark}
\label{rem1}
Note that $M_t=\{(1,\ldots,1)\}$ when $\sigma(t)=n$, and that for any element in $M_t$, we have
\[
\sum_{j=1}^{n+1} x_j-\sum_{i=n+2}^N x_i=\sigma(t)+2-N+n
\]
\end{remark}

Then we have

\begin{lemma}
\label{suppmin}
A solution $U\in A_n(\gamma,q)$ is minimal if, and only if we have $\varphi_U(t)\in M_t$ for all $0\leq t\leq m-1$.
\end{lemma}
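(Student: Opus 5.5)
The plan is to translate Lemma \ref{charmin} coordinate by coordinate into the language of supports. Fix $t$. By definition,
\[
\varphi_U(t)_j=\frac{1}{q-1}\Bigl(\sum_{i=n+2}^N \delta^t u_{ij}+\delta^t b_j\Bigr)\quad (1\le j\le n+1),
\]
\[
\varphi_U(t)_i=\frac{1}{q-1}\Bigl(\sum_{j=1}^{n+1}\delta^t u_{ij}+q-1-\delta^t b_i\Bigr)\quad (n+2\le i\le N).
\]
Here I use that $\delta^t$ commutes with the congruences: $\delta^t u\equiv p^t u \bmod q-1$, and $\delta^t(q-1-b)=q-1-\delta^t b$ since $b\notin\{0,q-1\}$. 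So these coordinates are nonnegative integers, and they are positive because $0<\delta^t b_k<q-1$. I would also record the global identity obtained by summing each group:
\[
\sum_{j\le n+1}\varphi_U(t)_j-\sum_{i\ge n+2}\varphi_U(t)_i=\frac{1}{q-1}\sum_{k=1}^N\delta^t b_k-(N-n-1)=\sigma(t)+1-(N-n-1).
\]
This uses $\delta^t b_k/(q-1)=\langle p^t a_k/d\rangle$ and the definition of $\sigma$. The result matches Remark \ref{rem1} and holds for every solution, not only minimal ones.

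Case $\sigma(t)\le n$. The first line of Lemma \ref{charmin} says exactly that $\varphi_U(t)_j=1$ for all $j\le n+1$. Given this, the identity above forces $\sum_{i\ge n+2}\varphi_U(t)_i=n+1-\sigma(t)-1+N-n-1=N-\sigma(t)-1$, so $\varphi_U(t)\in M_t$. Conversely, if $\varphi_U(t)\in M_t$, then the coordinates $j\le n+1$ equal $1$, which is the required equality.

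Case $\sigma(t)\ge n$. Here the second line of Lemma \ref{charmin} is equivalent to $\varphi_U(t)_i=1$ for all $i\ge n+2$, and the identity then gives $\sum_{j\le n+1}\varphi_U(t)_j=\sigma(t)+1$. When $\sigma(t)=n$ both cases apply simultaneously, and both descriptions reduce to $(1,\ldots,1)$, consistent with Remark \ref{rem1}.

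Combining the two cases over all $t\in\{0,\ldots,m-1\}$ and invoking Lemma \ref{charmin} gives the equivalence. One point needs care: Lemma \ref{charmin} is stated on $A_n(\gamma,q)$, while minimality was defined on $\overline{A}_n$. The present statement is also on $A_n$, so no use of Lemma \ref{noq-1} is needed, and $\delta^t$ acts on entries in $\{0,\ldots,q-2\}$ as the genuine cyclic digit shift. The only real obstacle is making sure that the integrality and positivity of $\varphi_U(t)$ and the sign conventions for the $q-1-b_i$ entries are handled correctly; everything else is bookkeeping.
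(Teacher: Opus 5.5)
Your proposal is correct and takes essentially the paper's route: Lemma \ref{charmin} becomes the statement that the relevant group of coordinates of $\varphi_U(t)$ equals $1$, and the remaining coordinate sum then follows. You get that sum from the global difference identity valid for every solution (the content of Remark \ref{rem1}), while the paper computes it directly; the computation is the same, and your explicit check that $\varphi_U(t)$ has positive integer coordinates (needed so that $M_t=\{(1,\ldots,1)\}$ when $\sigma(t)=n$) fills in a detail the paper leaves implicit.
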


\begin{proof}
Assume that we have $\varphi_U(t)\in M_t$ for all $0\leq t\leq m-1$. From the definition of the support, all the equalities from the preceding lemma are satisfied, and the solution is minimal.

When $U$ is a minimal solution, we compute the sum of the coordinates of $\varphi_U(t)$.

Assume first that we have $\sigma(t)\leq n$; again from the preceding lemma, the first $n+1$ coordinates of $\varphi_U(t)$ equal $1$, and the sum of the last $N-n-1$ is
\begin{eqnarray*}
& & N-n-1+\frac{1}{q-1}(\sum_{i=n+2}^N\sum_{j=1}^{n+1} \delta^t u_{ij}-\sum_{i=n+2}^N\delta^t b_i)\\
& = & N-n-1+\frac{1}{q-1}(\sum_{j=1}^{n+1} (q-1-\delta^t b_j)-\sum_{i=n+2}^N\delta^t b_i) =  N - \sigma(t) -1\\
\end{eqnarray*}
and the result is true in this case.

If we have $\sigma(t)\geq n$, the last $N-n-1$ coordinates of $\varphi_U(t)$ equal $1$, and the sum of the first $n+1$ is
\[
\frac{1}{q-1}(\sum_{i=n+2}^N\sum_{j=1}^{n+1} \delta^t u_{ij}+\sum_{j=1}^{n+1}\delta^t b_j) =  \frac{1}{q-1}(\sum_{i=n+2}^N \delta^t b_i+\sum_{j=1}^{n+1}\delta^t b_j) =  \sigma(t) +1
\]
and this is the announced result.
\end{proof}

\subsection{Existence of minimal solutions}

We have worked properties of minimal solutions so far, bur we still do not know if such solutions exist. This is the main result of this section.

We consider the base $p$ expansions of minimal solutions, and show that their digits must be solutions of certain systems of equations. Then it is sufficient to show that there exist such systems having solutions in $\{0,\ldots,p-1\}$.

For any $u\in\{0,\ldots,q-1\}$, its base $p$ expansion is
\[
u = \sum_{k=0}^{m-1} p^k u^{(k)},~ 0\leq u^{(k)} \leq p-1
\]
Note that we shall usually consider the superscript $(k)$ as an element of $\Z/m\Z$ in the following.

Let us begin with easy properties of the maps defined above.

\begin{lemma}
\label{shiftprop}
We have the following equalities
\begin{itemize}
\item[(i)] for any $0\leq u\leq q-1$, $p\delta^t u-\delta^{t+1}u=(q-1)u^{(m-t-1)}$;
\item[(ii)] for any $0\leq t\leq m-1$, $p(\sigma(t)+1)-(\sigma(t+1)+1)=\sum_{k=1}^N b_k^{(m-t-1)}$
\end{itemize}
\end{lemma}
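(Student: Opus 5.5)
The plan is to make the digit description of the shift explicit and then read off both identities from it; (ii) will be a direct consequence of (i) applied to the $b_k$.

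For (i), first take $0\leq u\leq q-2$ and write $u=\sum_{k=0}^{m-1}p^ku^{(k)}$. Since $p^m=q\equiv 1\mod q-1$, multiplying by $p^t$ and reducing modulo $q-1$ permutes the digits cyclically. Hence
\[
\delta^t u=\sum_{k=0}^{m-1}p^{k+t \bmod m}\,u^{(k)},
\]
which is the remainder itself: it lies in $\{0,\ldots,q-2\}$ because not all digits of $u$ equal $p-1$. In particular, the digit of $\delta^tu$ in position $m-1$ is $u^{(m-1-t)}$. Now multiply by $p$. Every digit moves up one position, except the top one, which becomes $p^m u^{(m-1-t)}=(q-1)u^{(m-1-t)}+u^{(m-1-t)}$. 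The remaining part is exactly the base $p$ expansion of $\delta^{t+1}u$, with $u^{(m-1-t)}$ now in position $0$. This gives
\[
p\delta^tu=\delta^{t+1}u+(q-1)u^{(m-t-1)}.
\]
The case $u=q-1$ is checked separately. There $\delta$ fixes $u$ and all digits equal $p-1$, so both sides equal $(p-1)(q-1)$.

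For (ii), use the remark that $b_k/(q-1)=a_k/d$ together with the identity $\langle p^tu/(q-1)\rangle=\delta^tu/(q-1)$, valid for $0\leq u\leq q-2$. This applies to every $b_k$, since $0<b_k<q-1$. It gives
\[
\sigma(t)+1=\sum_{k=1}^N\frac{\delta^t b_k}{q-1}.
\]
Apply (i) to each $b_k$, sum over $k$, and divide by $q-1$. This yields
\[
p(\sigma(t)+1)-(\sigma(t+1)+1)=\sum_{k}b_k^{(m-t-1)}.
\]
The same formula for $\sigma(t+1)$ holds even when $t=m-1$, because $\delta^m$ is the identity and $\sigma$ has period $r$, which divides $m$.

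There is no real obstacle here. The only point needing care is the bookkeeping of digit indices modulo $m$ in the description of $\delta^t$, together with the edge case $u=q-1$ in (i).
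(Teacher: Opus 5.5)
Your proof is correct and follows the same route as the paper. You use the cyclic digit description $\delta^t u=\sum_k p^k u^{(k-t)}$ to read off (i), then deduce (ii) by applying (i) to each $b_k$ through $\sigma(t)+1=\sum_k \delta^t b_k/(q-1)$; you also spell out the top-digit carry and the edge case $u=q-1$, which the paper leaves implicit.
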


\begin{proof}
The first assertion comes from the equalities $\delta^t u=\sum_{k=0}^{m-1} p^k u^{(k-t)}$.

For the second, remark that we have
\[
\sigma(t)+1=\sum_{k=1}^N \left\langle \frac{p^t b_k}{q-1} \right\rangle =\sum_{k=1}^N \frac{\delta^t b_k}{q-1}
\]
Thus the second assertion comes from the first one.
\end{proof}

We first give a characterization of the base $p$ digits of minimal solutions

\begin{lemma}
\label{red1}
There exists a minimal solution in $A_n(\gamma,q)$ if and only if there exists $(\x(0),\ldots,\x(m-1))\in M_0\times\ldots\times M_{m-1}$ -- where we set $\x(i):=(x_1(i),\ldots,x_N(i))$-- such that for all $0\leq t\leq m-1$, the system $({\rm E}_t)$ of $N$ equations
\[
\left\{
\begin{array}{llll}
\sum_{i=n+2}^N u_{ij}^{(m-t-1)} +b_{j}^{(m-t-1)} & = & px_j(t)-x_j(t+1) & 1\leq j\leq n+1 \\ 
\sum_{j=1}^{n+1} u_{ij}^{(m-t-1)}+(p-1-b_{i}^{(m-t-1)}) & = & px_i(t)-x_i(t+1) & n+2\leq i\leq N \\ 
\end{array}
\right.
\]
admits a solution $(u_{ij}^{(m-t-1)})_{i,j}\in \{0,\ldots,p-1\}^{(N-n-1)(n+1)}$.
\end{lemma}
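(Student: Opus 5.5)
Both directions go through the support map $\varphi_U$ and Lemma \ref{suppmin}: $U$ is minimal exactly when $\varphi_U(t)\in M_t$ for every $t$. The link between supports and base $p$ digits comes from Lemma \ref{shiftprop}(i). For any solution $U$ and any $t\in\Z/m\Z$, consider $p\varphi_U(t)-\varphi_U(t+1)$. By linearity of $\f_n$ and Lemma \ref{shiftprop}(i) applied coordinatewise to the $u_{ij}$ and to the $b_k$, this vector equals the vector of digits
\[
\f_n\bigl(U^{(m-t-1)}\bigr)+\b_n^{(m-t-1)} .
\]
Here $U^{(k)}$ denotes the matrix of $k$-th digits. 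For the last $N-n-1$ coordinates, the entry $q-1-b_i$ has $k$-th digit $p-1-b_i^{(k)}$, since $0<b_i<q-1$. So the system $({\rm E}_t)$ is exactly the identity $p\varphi_U(t)-\varphi_U(t+1)=\f_n(U^{(m-t-1)})+\b_n^{(m-t-1)}$, with $\x(t)=\varphi_U(t)$.

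For the forward direction, let $U$ be minimal and set $\x(t):=\varphi_U(t)$. Lemma \ref{suppmin} gives $\x(t)\in M_t$, and the identity above shows that the digits $u_{ij}^{(m-t-1)}\in\{0,\ldots,p-1\}$ solve $({\rm E}_t)$ for each $t$.

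For the converse, fix $\x(t)\in M_t$ and digit solutions $(u_{ij}^{(m-t-1)})$ of $({\rm E}_t)$ for every $t$. Define
\[
u_{ij}:=\sum_{k} p^k u_{ij}^{(k)}\in\{0,\ldots,q-1\}.
\]
Then we must check that $U=(u_{ij})$ lies in $\overline{A}_n(\gamma,q)$, and that $\varphi_U(t)=\x(t)$ for all $t$. Lemma \ref{suppmin} then gives minimality, and Lemma \ref{noq-1} places $U$ in $A_n(\gamma,q)$.

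To recover the supports, multiply equation $({\rm E}_t)$ by $p^{m-t-1}$ and sum over $t\in\Z/m\Z$. On the left this reconstructs $\f_n(U)+\b_n$. On the right it telescopes cyclically to $\sum_t (p^{m-t}x(t)-p^{m-t-1}x(t+1))=(q-1)\x(0)$, using $p^m\equiv 1$ cyclically, i.e. $p^m x(0)-x(0)$. Hence $\f_n(U)+\b_n=(q-1)\x(0)$, so $U$ is a solution and $\varphi_U(0)=\x(0)$. Applying the same computation to the shifted data, which amounts to summing with weights $p^{m-t-1+s}$ reduced cyclically, gives $\varphi_U(s)=\x(s)$ for every $s$.

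The main obstacle is the bookkeeping with $\delta$. The quantity $\delta^t u$ equals the cyclically rotated digit sum $\sum_k p^k u^{(k-t)}$, which coincides with $(p^t u \bmod q-1)$ except when $u=q-1$, and this case needs care in the definition of $\varphi_U$. It is cleanest to define $\varphi_U$ via digit rotations throughout, so the identity holds uniformly. A secondary point is that the weighted sum must be done cyclically to get the factor $q-1=p^m-1$. I would handle it by proving the general claim $\f_n(\delta^sU)+\delta^s\b_n=(q-1)\x(s)$ directly from the rotated digit expansions.
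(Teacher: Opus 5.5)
Your proposal is correct and follows the paper's proof. The forward direction takes $\x(t)=\varphi_U(t)$ and combines Lemma \ref{suppmin} with Lemma \ref{shiftprop}(i); the converse rebuilds $U$ from its digits and telescopes the $p^{m-t-1}({\rm E}_t)$ cyclically to get $\f_n(\delta^sU)+\delta^s\b_n=(q-1)\x(s)$. Your handling of entries equal to $q-1$ (running the ``if'' direction of Lemma \ref{suppmin} on $\overline{A}_n(\gamma,q)$, then invoking Lemma \ref{noq-1}) fills a point the paper passes over silently, and since the paper's $\delta$ fixes $q-1$ it is already digit rotation, so you need not redefine $\varphi_U$.
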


\begin{proof}
Assume that there exists a minimal solution $U\in A_n(\gamma,q)$, and set $\x(t)=\varphi_U(t)$ for all $0\leq t\leq m-1$; from lemma \ref{suppmin}, we have $\x(t)\in M_t$ for all $t$. The right hand sides of the equations are the $N$ coordinates of $p\x(t)-\x(t+1)$. But we have $p\x(t)-\x(t+1)=p\varphi_U(t) -\varphi_U(t+1)$, and we can use lemma \ref{shiftprop} (i) to give another expression for the coordinates of this vector. We get $\f_n(U^{(m-t-1)})+\b_n^{(m-t-1)}$, where the digits are taken coordinate by coordinate; these are the hand sides of the equations (note that the base $p$ digits of $q-1-u$ are the $p-1-u^{(k)}$).

Conversely, fix such an $(\x(0),\ldots,\x(m-1))$, and let $(u_{ij}^{(m-t-1)})_{i,j}\in \{0,\ldots,p-1\}^{(N-n-1)(n+1)}$ denote a solution of $(E_t)$ in $\{0,\ldots,p-1\}^{(N-n-1)(n+1)}$. 

For any $n+2\leq i\leq N$, $1\leq j\leq n+1$, we define $u_{ij}:=\sum_{t=0}^{m-1} p^tu_{ij}^{(t)}$ and $U=(u_{ij})$. We have $\f(U)+\b_n=(q-1)\x(0)$ since the equations in the $(p^{m-t-1}E_t)$ telescope, and $\varphi_U(0)=\x(0)$. In the same way, we get $\varphi_U(t)=\x(t)$ for all $t$ and $U$ is minimal from lemma \ref{suppmin}.
\end{proof}

Here is a first reduction in the search of a solution

\begin{lemma}
\label{red2}
If there exists $\x=(\x(0),\ldots,\x(m-1))\in M_0\times\ldots\times M_{m-1}$, which satisfy the systems
\[
({\rm I}_t)\left\{
\begin{array}{llll}
x_j(t+1) & \leq & px_j(t) - b_{j}^{(m-t-1)} & 1\leq j\leq n+1 \\ 
x_i(t+1)  & \leq & px_i(t)-(p-1-b_{i}^{(m-t-1)}) & n+2\leq i\leq N \\ 
\end{array}
\right.
\]
then there exists a minimal solution.
\end{lemma}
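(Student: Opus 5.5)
The plan is to deduce the statement from Lemma \ref{red1}. We keep the given $\x=(\x(0),\ldots,\x(m-1))\in M_0\times\cdots\times M_{m-1}$, and for each $t$ we show that the system $({\rm E}_t)$ has a solution in digits $\{0,\ldots,p-1\}$. Write $R_j:=px_j(t)-x_j(t+1)-b_j^{(m-t-1)}$ for $1\leq j\leq n+1$ and $C_i:=px_i(t)-x_i(t+1)-(p-1-b_i^{(m-t-1)})$ for $n+2\leq i\leq N$. Then $({\rm E}_t)$ asks for an $(n+1)\times(N-n-1)$ matrix $(u_{ij}^{(m-t-1)})$ of integers in $[0,p-1]$ with row sums $R_j$ and column sums $C_i$. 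The hypotheses $({\rm I}_t)$ say exactly that all the $R_j$ and $C_i$ are nonnegative.

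The first step is the compatibility condition $\sum_j R_j=\sum_i C_i$. By Remark \ref{rem1}, applied to $\x(t)\in M_t$ and $\x(t+1)\in M_{t+1}$,
\[
\sum_{j\leq n+1}(px_j(t)-x_j(t+1))-\sum_{i\geq n+2}(px_i(t)-x_i(t+1))=p(\sigma(t)+2-N+n)-(\sigma(t+1)+2-N+n).
\]
Lemma \ref{shiftprop} (ii) rewrites this right-hand side as $\sum_{k=1}^N b_k^{(m-t-1)}-(p-1)(N-n-1)$. This is precisely $\sum_j b_j^{(m-t-1)}-\sum_i(p-1-b_i^{(m-t-1)})$, so the margins balance.

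The second step is to build a nonnegative integer matrix with these margins, using the northwest corner rule (greedy filling). This always succeeds once the margins balance, and every entry it produces is at most $\min(R_j,C_i)$. It therefore suffices that, for each $t$, one family of margins is bounded by $p-1$. This comes from the shape of $M_t$. If $\sigma(t)\leq n$, then $x_j(t)=1$ for $j\leq n+1$, so $R_j=p-x_j(t+1)-b_j^{(m-t-1)}\leq p-1$ as soon as $x_j(t+1)\geq 1$. If $\sigma(t)\geq n$, then $x_i(t)=1$ for $i\geq n+2$, so $C_i=1+b_i^{(m-t-1)}-x_i(t+1)\leq p-1$, again as soon as $x_i(t+1)\geq 1$. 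Once every entry lies in $\{0,\ldots,p-1\}$, Lemma \ref{red1} gives the minimal solution.

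The main obstacle is the borderline case where a coordinate $x_j(t+1)$ (respectively $x_i(t+1)$) vanishes and the corresponding digit of $b$ is extreme, so that a single margin equals $p$. This can happen because $M_{t+1}$ of type (i) or (ii) allows zero coordinates on the non-constant side. There the greedy filling must be refined. I would first try to split the offending margin $p$ across two cells, for instance $p-1$ and $1$, which requires at least two cells on the other side ($N-n-1\geq 2$, respectively $n+1\geq 2$). The remaining filling is then checked with the capacitated transportation (Gale–Ryser type) criterion: for all subsets $J,I$,
\[
\sum_{j\in J}R_j\leq(p-1)|J||I|+\sum_{i\notin I}C_i.
\]
If that fails in degenerate configurations, the fallback is to show that $({\rm I}_t)$ together with membership in $M_t$ and $M_{t+1}$ excludes them, or to modify $\x(t+1)$ within $M_{t+1}$ without breaking $({\rm I}_{t+1})$.
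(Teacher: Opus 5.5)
Your first two steps match the paper. The balance $\sum_j R_j=\sum_i C_i$ comes from Remark~\ref{rem1} and Lemma~\ref{shiftprop}~(ii), exactly as you compute. Your northwest-corner filling does the same job as the paper's induction on the common total $M$.

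The gap is in the last step. You reduce the digit bound to the condition $x_j(t+1)\geq 1$ (resp.\ $x_i(t+1)\geq 1$), and you assert that the borderline case, where one margin equals $p$, ``can happen''. You then only sketch a repair: splitting the margin, a capacitated Gale--Ryser criterion that you do not verify, and unspecified ``fallbacks''. As written, the proposal does not prove that the entries lie in $\{0,\ldots,p-1\}$.

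The missing observation is that the borderline case is impossible. The systems $({\rm I}_t)$ are imposed for every $t\in\Z/m\Z$, cyclically with $\x(m)=\x(0)$, as in Lemma~\ref{red1}.
\begin{itemize}
\item Suppose $x_j(s)=0$ for some $j\leq n+1$. Then $({\rm I}_s)$ gives $0\leq x_j(s+1)\leq -b_j^{(m-s-1)}\leq 0$, so $x_j(s+1)=0$ and $b_j^{(m-s-1)}=0$. Going once around the cycle, every base $p$ digit of $b_j$ vanishes, so $b_j=0$. This contradicts $0<b_j<q-1$.
\item Likewise, $x_i(s)=0$ for some $i\geq n+2$ forces $x_i(s+1)=0$ and $b_i^{(m-s-1)}=p-1$ for all $s$. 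Then $b_i=q-1$, again a contradiction.
\end{itemize}
Hence every coordinate of every $\x(t)$ is at least $1$. It follows that $R_j=p-x_j(t+1)-b_j^{(m-t-1)}\leq p-1$ when $\sigma(t)\leq n$. Similarly $C_i=1+b_i^{(m-t-1)}-x_i(t+1)\leq b_i^{(m-t-1)}\leq p-1$ when $\sigma(t)\geq n$. This bounds every entry of the corresponding rows or columns, so Lemma~\ref{red1} applies.

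The paper's proof states this bound directly from $x_j(t)=1$, implicitly using $x_j(t+1)\geq 1$. That is consistent with the fact, used in the proof of Proposition~\ref{minoval}, that supports of solutions have positive coordinates. So you were right to flag the point, but the fix is this short exclusion argument, not a refined transportation argument.
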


\begin{proof}
Assume that there exists such $\x$. We get $mN$ non negative integers 
\[
y_j(t):=px_j(t) - b_{j}^{(m-t-1)}-x_j(t+1),~ 1\leq j\leq n+1,
\]
\[
y_i(t):=px_i(t)-(p-1-b_{i}^{(m-t-1)})-x_i(t+1),~ n+2\leq i \leq N
\]
We fix some $t$. Note that the system $({\rm E}_t)$ can be rewritten so that the above numbers are the right hand sides of the equations, and from Lemma \ref{red1}, it is sufficient to show that this system admits a solution $(c_{ij})\in\{0,\ldots,p-1\}^{(N-n-1)(n+1)}$.

First note that we have $M=\sum_{j=1}^{n+1} y_j(t)=\sum_{i=n+2}^N y_i(t)$ for all $t$; this comes from Remark \ref{rem1} and the second assertion of Lemma \ref{shiftprop}.

We show by induction on $M$ that this is sufficient to ensure the existence of nonnegative integer solutions to $({\rm E}_t)$. If $M=0$, we have the trivial solution $c_{ij}=0$ for all $i,j$. If $M>0$, there exists some $n+2\leq i_0\leq N$ such that $y_{i_0}(t)\geq 1$, and some $1\leq j_0\leq n+1$ such that $y_{j_0}(t)\geq 1$. If we replace $y_{i_0}(t)$ and $y_{j_0}(t)$ respectively by $y_{i_0}(t)-1$ and $y_{j_0}(t)-1$, we get a solution $(c_{ij})$ from the induction hypothesis, and it suffices to change $c_{i_0j_0}$ to $c_{i_0j_0}+1$ to obtain a solution for our original system.

Finally, for such a solution, none of the $c_{ij}$ can exceed $p-1$. Actually from Lemma \ref{suppmin}, when $\sigma(t)\leq n$, we have $x_j(t)=1$, and all $y_j(t)$, $1\leq j\leq n+1$, are at most $p-1$; we get the assertion from the first $n+1$ equations of $({\rm E}_t)$. If $\sigma(r)\geq n$, we conclude from the $N-n-1$ last equations in the same way. 
\end{proof}

As a consequence, the following result ensures the existence of minimal solutions. We add an additional hypothesis, but we will see farther that it is sufficient for our purposes.

\begin{lemma}
Assume that we have $\sigma(t_0)=n$ for some $0\leq t_0\leq m-1$. Then there exists $(\x(0),\ldots,\x(m-1))\in M_0\times\ldots\times M_{m-1}$ satisfying the systems $({\rm I}_t)$ for all $t$.
\end{lemma}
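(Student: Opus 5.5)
The plan is to build the sequence $\x(t)$ step by step along the cycle $\Z/m\Z$, starting and ending at $t_0$. The hypothesis $\sigma(t_0)=n$ forces $M_{t_0}=\{(1,\ldots,1)\}$ (Remark \ref{rem1}), and this is what will let the cycle close. So we set $\x(t_0):=(1,\ldots,1)$ and define $\x(t_0+1),\ldots,\x(t_0+m-1)$ inductively. To keep the inequalities $({\rm I}_t)$ under control, we impose an extra condition: every $\x(t)$ lies in
\[
M_t^+:=M_t\cap\{x_k\geq 1 \mbox{ for all } k\}.
\]
This set is non-empty. In case (i) the $N-n-1$ free coordinates must sum to $N-\sigma(t)-1\geq N-n-1$, and in case (ii) the $n+1$ free coordinates must sum to $\sigma(t)+1\geq n+1$.

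For a given $\x(t)\in M_t^+$, write $c_k(t)$ for the right-hand sides of $({\rm I}_t)$:
\[
c_j(t)=px_j(t)-b_j^{(m-t-1)} \mbox{ for } j\leq n+1,\qquad c_i(t)=px_i(t)-(p-1-b_i^{(m-t-1)}) \mbox{ for } i\geq n+2.
\]
The task is to find $\x(t+1)\in M_{t+1}^+$ with $x_k(t+1)\leq c_k(t)$ for all $k$. Two observations make this possible.

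\emph{First}, since every digit lies in $\{0,\ldots,p-1\}$ and $x_k(t)\geq 1$, we get $c_k(t)\geq px_k(t)-(p-1)\geq x_k(t)\geq 1$.

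\emph{Second}, we compute the difference
\[
\sum_{j\leq n+1}c_j(t)-\sum_{i\geq n+2}c_i(t).
\]
Use Remark \ref{rem1} for $\x(t)$, and Lemma \ref{shiftprop}(ii) to replace $\sum_k b_k^{(m-t-1)}$ by $p(\sigma(t)+1)-(\sigma(t+1)+1)$. A short computation shows the difference equals $\sigma(t+1)+2-N+n$. This is exactly the constant that Remark \ref{rem1} attaches to elements of $M_{t+1}$.

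Now split according to $\sigma(t+1)$.

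If $\sigma(t+1)\leq n$, we put $x_j(t+1)=1\leq c_j(t)$ for $j\leq n+1$. The free coordinates $i\geq n+2$ must satisfy $1\leq x_i(t+1)\leq c_i(t)$ and sum to $N-\sigma(t+1)-1$. The difference identity together with $c_j(t)\geq 1$ gives
\[
\sum_{i\geq n+2} c_i(t)\geq (n+1)-(\sigma(t+1)+2-N+n)=N-\sigma(t+1)-1.
\]
The target sum is at least $N-n-1$, the number of free coordinates. Hence we can choose integers with $1\leq x_i(t+1)\leq c_i(t)$ summing exactly to the target, by decreasing the $c_i(t)$ one unit at a time.

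If $\sigma(t+1)\geq n$, the argument is symmetric. We put $x_i(t+1)=1$ for $i\geq n+2$, and use $\sum_{j\leq n+1} c_j(t)\geq \sigma(t+1)+1\geq n+1$.

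The step I expect to be the real obstacle is closing the cycle. The greedy forward construction succeeds at every step, but after $m-1$ steps we also need $\x(t_0)=\x(t_0+m)$ to satisfy $({\rm I}_{t_0-1})$. This is precisely where the hypothesis $\sigma(t_0)=n$ is used. Because $\x(t_0)=(1,\ldots,1)$ is the unique element of $M_{t_0}$, the constraint $\x(t_0)\leq c(t_0-1)$ reduces to $c_k(t_0-1)\geq 1$. That holds by the first observation, since $\x(t_0-1)\in M_{t_0-1}^+$.

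Hence the whole periodic family $(\x(0),\ldots,\x(m-1))$ lies in $M_0\times\cdots\times M_{m-1}$ and satisfies every $({\rm I}_t)$. Lemma \ref{red2} then gives a minimal solution.
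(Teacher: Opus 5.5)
Your proof is correct and follows essentially the same route as the paper. Both start from the forced value $\x(t_0)=(1,\ldots,1)$, build $\x(t+1)$ forward by a case split on $\sigma(t+1)$ versus $n$, and check feasibility by summing the right-hand sides of $({\rm I}_t)$ with Lemma~\ref{shiftprop}(ii).

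Your difference identity, obtained via Remark~\ref{rem1}, merges the paper's separate $(\sigma(k),\sigma(k+1))$ cases into a single computation. Restricting to $M_t^+$ also makes explicit the positivity of the right-hand sides, which the paper uses tacitly, for instance when it closes the cycle at a $t$ with $\sigma(t)=n$.
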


\begin{proof}
From $\sigma(t_0)=n$, we must choose $\x(t_0)=(1,\ldots,1)$ from Lemma \ref{suppmin}. 

We first construct $\x(t_0+1)$ satisfying $({\rm I}_{t_0})$, depending on $\sigma(t_0+1)$. 

If we have $\sigma(t_0+1)=n$, then we must choose $\x(t_0+1)=(1,\ldots,1)$; since all digits $b_{j}^{(m-t_0-1)}$ and $p-1-b_{i}^{(m-t_0-1)}$ are in $\{0,\ldots,p-1\}$, the system is satisfied.

Assume that $\sigma(t_0+1)<n$; then we have $x_j(t_0+1)=1$ for $1\leq j\leq n+1$, and reasoning as above, we see that the first $n+1$ inequalities in $({\rm I}_{t_0})$ are satisfied. It remains to show that there exists $x_i(t_0+1)$ for $n+2\leq i\leq N$ whose sum is $N-1-\sigma(t_0+1)$ and which satisfy the last $N-n-1$ inequalities. But it is sufficient to show that the sum of the right hand sides of these last inequalities is at least $N-1-\sigma(t_0+1)$. 

We evaluate this last sum: it is (we use Lemma \ref{shiftprop} (ii) here)
\begin{eqnarray*}
& & N-n-1+\sum_{i=n+2}^N b_{i}^{(m-t_0-1)}\\ & = & N-n-1+p\sigma(t_0)-\sigma(t_0+1)+p-1-\sum_{j=1}^{n+1} b_{j}^{(m-t_0-1)}\\
& = & N-1-\sigma(t_0+1)+(p-1)(n+1)-\sum_{j=1}^{n+1} b_{j}^{(m-t_0-1)}
\end{eqnarray*}
and we get the result since the terms of the lasr sum are digits, thus at most $p-1$.

Finally, if $\sigma(t_0+1)>n$, the last $N-n-1$ inequalities in $({\rm I}_{t_0})$ are satisfied, and it is sufficient to show that the sum of the right hand sides of the first $n+1$ ones is at least $\sigma(t_0+1)+1$. Here we have
\begin{eqnarray*}
& & p(n+1)-\sum_{j=1}^{n+1} b_{j}^{(m-t_0-1)}\\ & = & p(n+1)-\left(p(\sigma(t_0)+1)-(\sigma(t_0+1)+1)-\sum_{i=n+2}^N b_{i}^{(m-t_0-1)}\right)\\
& = & \sigma(t_0+1)+1+\sum_{i=n+2}^N b_{i}^{(m-t_0-1)}\\
\end{eqnarray*}

In any case, we can choose $\x(t_0+1)\in M_{t_0+1}$ such that the inequalities in $({\rm I}_{t_0})$ are satisfied.

Assume that we have constructed $\x(t_0+1)\in M_{t_0+1},\ldots,\x(k)\in M_{k}$ such that the inequalities $({\rm I}_{t_0}),\ldots,({\rm I}_{k-1})$ are satisfied. We look for $\x(k+1)\in M_{k+1}$ such that the system $({\rm I}_{k})$ is satisfied.

As above, we use a case by case reasoning, depending on the values of $\sigma(k),\sigma(k+1)$. 

If $\sigma(k)=n$, we reason as above.

If $\sigma(k+1)=n$, there is nothing to do since $\x(k+1)=(1,\ldots,1)$ satisfies $({\rm I}_{k})$: all the right-hand sides of the inequalities are positive integers.

We treat only one of the four remaining cases. The other ones are completely similar, and we leave their verification to the reader.

Assume that both $\sigma(k)$ and $\sigma(k+1)$ are less than $n$. The first $n+1$ inequalities of $({\rm I}_{k})$ are verified since the $x_j(k),x_j(k+1)$ equal $1$ in this case. In order to verify that an element of $M_{k+1}$ satisfies the last $N-n-1$, we just have to check that the sum of their last $N-n-1$ right-hand sides are at least $N-1-\sigma(k+1)$. From the description of the set $M_k$ and the equality from lemma \ref{shiftprop} (ii), we have
\begin{eqnarray*}
& & \sum_{i=n+2}^N \left(px_i(k)-(p-1- b_{i}^{(m-k-1)})\right)\\ 
& = & p(N-\sigma(k)-1)-(p-1)(N-n-1)+\sum_{i=n+2}^N b_{i}^{(m-k-1)}\\
& = & N-p(\sigma(k)+1)+(p-1)(n+1)+p(\sigma(k)+1)\\
& & -(\sigma(k+1)+1)-\sum_{j=1}^{n+1} b_{j}^{(m-k-1)}\\
& = & N-1-\sigma(k+1)+(p-1)(n+1)-\sum_{j=1}^{n+1} b_{j}^{(m-k-1)}\\
\end{eqnarray*}
and the desired inequality, since each of the $b_{j}^{(m-k-1)}$ is at most $p-1$.

We have shown the lemma inductively.
\end{proof}

Summing up, we get the following result.

\begin{proposition}
Assume that there exists some $0\leq t\leq m-1$ such that $\sigma(t)=n$. Then the set $M_n(\gamma,q)$ is non empty.
\end{proposition}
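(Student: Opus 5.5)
The proposition follows by chaining the three lemmas just proved. I will apply them in reverse order: first get a sequence of supports, then digits, then the solution itself.

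\textbf{Step 1 (supports).} Since $\sigma(t_0)=n$ for some $t_0$, the preceding lemma gives a family $\x=(\x(0),\ldots,\x(m-1))\in M_0\times\cdots\times M_{m-1}$ satisfying every system $({\rm I}_t)$, $0\leq t\leq m-1$.

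I have to be careful about indices here: the superscripts and the $t$'s live in $\Z/m\Z$. The inductive construction starts at $\x(t_0)=(1,\ldots,1)$ and proceeds through $t_0+1,\ldots,t_0+m-1$. The last system $({\rm I}_{t_0-1})$ links $\x(t_0-1)$ to $\x(t_0)=(1,\ldots,1)$. This is exactly the case ``$\sigma(k+1)=n$'' treated in the proof: all right-hand sides of $({\rm I}_{t_0-1})$ are positive integers, so the value $1$ is admissible. Hence the cycle closes and all $m$ systems hold simultaneously.

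\textbf{Step 2 (digits).} Lemma \ref{red2} applied to this $\x$ gives, for each $t$, digits $(u_{ij}^{(m-t-1)})\in\{0,\ldots,p-1\}^{(N-n-1)(n+1)}$ solving $({\rm E}_t)$. Lemma \ref{red1} then assembles them into $U$ with $u_{ij}=\sum_t p^t u_{ij}^{(t)}$.

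\textbf{Step 3 (minimality).} The telescoping of the equations gives $\varphi_U(t)=\x(t)\in M_t$ for all $t$, so $U$ is minimal by Lemma \ref{suppmin}, and $M_n(\gamma,q)\neq\emptyset$.

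\textbf{Main obstacle.} The one point needing care is that $U$ lies in the range where the earlier characterizations apply, namely $A_n(\gamma,q)$ and not merely $\overline{A}_n(\gamma,q)$. A priori an entry $u_{ij}$ could equal $q-1$ (all digits $p-1$). I would rule this out by Lemma \ref{noq-1}: once $U$ has the minimal weight, no coordinate equals $q-1$. Alternatively, the minimal weight is attained already in $\overline{A}_n$, since Proposition \ref{minoval}'s bound holds there too, replacing $q-1$ by $0$ strictly lowers the weight. Either way $U\in M_n(\gamma,q)$.
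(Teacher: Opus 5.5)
Your proposal is correct and follows the paper's route. The paper proves the proposition simply by combining the existence lemma (an $\x\in M_0\times\cdots\times M_{m-1}$ satisfying every $({\rm I}_t)$) with Lemma \ref{red2}, which in turn rests on Lemma \ref{red1} and Lemma \ref{suppmin}; your two extra remarks just make explicit what the paper leaves implicit. The cyclic closure at $({\rm I}_{t_0-1})$ is fine, and so is the exclusion of entries equal to $q-1$: the weight formula $s_p(u)=\frac{p-1}{q-1}\sum_t\delta^t u$ still holds for $u=q-1$ because $\delta$ fixes it, so the telescoped $U$ attains the bound in $\overline{A}_n(\gamma,q)$ and Lemma \ref{noq-1} then applies.
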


\subsection{Minimal solutions for varying fields}

In this section, we show that the minimal solutions modulo any power $q$ of $p$ such that $q\equiv 1\mod d$ are completely determined by those modulo $p^r$, under an hypothesis similar to the one from the preceding proposition.

\begin{proposition}
\label{minsol}
Assume that $\sigma(0)=n$; then for any $s\geq 1$, we have
\[
M_n(\gamma,p^{rs})=M_n(\gamma,p^r)+p^rM_n(\gamma,p^r)+\ldots+p^{r(s-1)}M_n(\gamma,p^r)
\]
\end{proposition}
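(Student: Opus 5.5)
The plan is to work with the support characterization of minimal solutions (Lemma \ref{suppmin}) together with the digit description of Lemma \ref{red1}. Write $q=p^{rs}$ and $q_0=p^r$. The first point is that $b_k=\frac{q-1}{d}a_k=\frac{q_0-1}{d}a_k\,(1+q_0+\ldots+q_0^{s-1})$. So the base $p$ expansion of $b_k$ modulo $q$ is the expansion of $b_k^0:=\frac{q_0-1}{d}a_k$ repeated $s$ times, and the same holds for the vector $\b_n$. The $p$-signature is computed from $\a$ and $d$ alone and has period $r$, so the sets $M_t$ depend only on $t \bmod r$.

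For the inclusion $\supseteq$, we take $U_0,\ldots,U_{s-1}\in M_n(\gamma,p^r)$ and set $U=\sum_k q_0^kU_k$. Its coordinates lie in $\{0,\ldots,q-1\}$, and by Lemma \ref{noq-1} they actually lie in $\{0,\ldots,q-2\}$. Its digits in positions $kr,\ldots,kr+r-1$ are those of $U_k$. Each $U_k$ has supports $\varphi_{U_k}(t)\in M_t$, and $\varphi_{U_k}(0)=(1,\ldots,1)$ because $\sigma(0)=n$. Each block therefore satisfies the digit systems $({\rm E}_t)$ of Lemma \ref{red1} for $t$ in its range, with the vectors $\x(t)$ coming from $U_k$. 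These are glued at the block boundaries, where the value $\x=(1,\ldots,1)$ is forced. The converse half of the proof of Lemma \ref{red1} then reconstructs $U$ with $\varphi_U(t)\in M_t$ for all $t\in\Z/rs\Z$, so $U$ is minimal by Lemma \ref{suppmin}. The fact that the boundary value is $(1,\ldots,1)$ is what makes the telescoping close up consistently.

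For $\subseteq$, we take $U\in M_n(\gamma,q)$. By Lemma \ref{suppmin}, $\varphi_U(t)\in M_t$ for every $t$. Whenever $t\equiv 0\bmod r$ we have $\sigma(t)=n$, so $\varphi_U(kr)=(1,\ldots,1)$. We cut the digit string of each $u_{ij}$ into the $s$ blocks of length $r$ and define $U_k$ from the $k$th block. Then $U_k$ has coordinates in $\{0,\ldots,q_0-1\}$ and $U=\sum_k q_0^kU_k$.

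The main step is to show that $U_k\in\overline{A}_n(\gamma,p^r)$ and that it is minimal. We restrict the telescoped equations $({\rm E}_t)$ to $t$ in the $k$th block. Since $\x$ equals $(1,\ldots,1)$ at both ends of the block, multiplying $({\rm E}_t)$ by the appropriate powers of $p$ and summing gives $\f_n(U_k)+\b_n^0=(q_0-1)(1,\ldots,1)$. So $U_k$ is a solution modulo $q_0-1$, and its supports are the $\x(t)$ of the block, which lie in $M_t$. Lemma \ref{suppmin}, applied with $m=r$, then shows that $U_k$ is minimal; this also excludes coordinates equal to $q_0-1$, by Lemma \ref{noq-1}.

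The obstacle I expect is the bookkeeping of the index reversal $m-t-1$ in Lemma \ref{red1}, that is, checking which block of digits corresponds to which range of $t$. A clean way to handle it is to verify directly that $\varphi_U(t)$ for $t$ in a block agrees with $\varphi_{U_k}(t \bmod r)$. This follows from the relation $p\varphi(t)-\varphi(t+1)=$ (digit vector) in Lemma \ref{shiftprop}(i), read backwards from the known value $(1,\ldots,1)$.
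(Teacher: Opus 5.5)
Your proof is correct. Its skeleton is the paper's: base $p^r$ blocks, the forced support $\varphi_U(kr)=(1,\ldots,1)$, and telescoping over one block to get $\f_n(U_k)+\d_n=(p^r-1)(1,\ldots,1)$. The difference is in how you certify minimality.

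The paper certifies minimality by counting $p$-weights.
\begin{itemize}
\item For $\supseteq$, it checks directly that $\f_n(U)+\b_n=\sum_k p^{rk}(\f_n(U_k)+\d_n)=(q-1)(1,\ldots,1)$.
\item In both directions it combines additivity of $s_p$ over blocks with the fact that the minimal weight modulo $p^{rs}$ is $s$ times the minimal weight modulo $p^r$. This holds because all base $p^r$ digits of $b_k$ equal $\frac{p^r-1}{d}a_k$ and $\sigma$ is $r$-periodic.
\item In $\subseteq$, each block has weight at least the minimum and the blocks add up to $s$ times it, so each block is minimal.
\end{itemize}

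You instead match supports blockwise and apply Lemma~\ref{suppmin}. In $\supseteq$ this requires gluing the digit systems of Lemma~\ref{red1}, together with the index bookkeeping you anticipated. Digits in positions $kr,\ldots,kr+r-1$ govern the supports at $t=(s-k-1)r,\ldots,(s-k)r$, and your recursion from the endpoint value $(1,\ldots,1)$ settles it.

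The trade-off is this. The weight count is shorter and needs no bookkeeping. Your route avoids comparing minimal weights across the two moduli, and it also shows that $U$ restricted to each block has the same supports as $U_k$.

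One minor point: you apply Lemma~\ref{suppmin} to $U_k$ before knowing that its coordinates are at most $p^r-2$, although the lemma is stated on $A_n$. This is harmless. The \emph{if} directions of Lemmas~\ref{charmin} and~\ref{suppmin} go through verbatim on $\overline{A}_n$ with the convention that $\delta$ fixes $q-1$.
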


\begin{proof}
We set $q:=p^{rs}$. First remark that for $d_k:=\frac{p^r-1}{d}a_k$, $1\leq k\leq N$, we have $b_k=\frac{q-1}{p^r-1}a_k=(1+p^r+\ldots+p^{r(s-1)})d_k$. In other words, the digits of the base $p^r$ expansion of $b_k$ are all equal to $d_k$, and the same is true for $q-1-b-k$, with the unique digit $p^r-1-d_k$. Moreover, the vector $\d_n$ constructed as $\b_n$, but from the $d_k$, satisfies $\b_n=(1+p^r+\ldots+p^{r(s-1)})\d_n$.

Choose $U^{(0)},\ldots,U^{(s-1)}\in M_n(\gamma,p^r)$ minimal solutions, and set $U:=\sum_{t=0}^{s-1}p^{rt}U^{(t)}$ (the combination is coordinate by coordinate). Then we have
\[
\f_n(U)+\b_n=\sum_{t=0}^{s-1}p^{rt}(\f_n(U^{(t)})+\d_n)=\sum_{t=0}^{s-1}p^{rt}(p^r-1)(1,\ldots,1)=(q-1)(1,\ldots,1)
\]
where the second equality comes from our hypothesis $\sigma(0)=n$ and Proposition \ref{suppmin}. Thus $U$ is a solution, and its $p$-weight is the sum of those of the $U^{(i)}$, that is $s$ times the $p$-weight of a minimal solution in $A_n(\gamma,p^r)$. But this is the weight of a minimal solution in $A_n(\gamma,q)$ from the description of the digits of the $b_i$ above and the $r$-periodicity of $\sigma$. We have proved one inclusion.

We come to the other one : let $U\in M_n(\gamma,q)$ denote a minimal solution, and write its base $p^r$ expansion $U=\sum_{t=0}^{s-1}p^{rt}U^{(t)}$ (this is just the base $p^r$ expansion of each coordinate). 

Since we assumed $\sigma(0)=n$, we must have $\varphi_U(0)=(1,\ldots,1)$ from Proposition \ref{suppmin}. Moreover, we have seen that $\sigma$ is $r$-periodic; thus we have $\varphi_U(tr)=(1,\ldots,1)$ for all $t\in \{0,\ldots,s-1\}$. 

We deduce, reasoning as at the beginning of the proof of Lemma \ref{red1}, that for any $t\in \{0,\ldots,s-1\}$, we have the equality
\[
\f_n(U^{(s-t-1)})+\d_n=p^r\varphi_U(tr)-\varphi_U((t+1)r)=(p^r-1)(1,\ldots,1)
\]
where we have used the description of the $p^r$-digits of $\b_n$. Thus each of the $U^{(t)}$ is in $A_n(\gamma,p^r)$. Finally, since the $p$-weight of $U$ is the sum of those of the $U^{(t)}$, we deduce from the minimality of $U$ that each of the $U^{(t)}$ is minimal. We are done.
\end{proof}

\section{The generic Newton polygon for $L$-functions}
\label{sec3}

In this section, we consider the family of polynomials $g(x)=\prod_{i=1}^N (x-\alpha_i)^{a_i}$ when the $\alpha_i$ vary in $\overline{\F}_p$, subject to the condition $\alpha_i\neq \alpha_j$ when $i\neq j$. If we fix $\chi$ a multiplicative character of order $d$ of a field $\F_q$ containing all $\alpha_i$ and the $d$-th roots of unity, we get a family of $L$-functions $L(g,\chi;T)$. It follows from Grothendieck's specialization theorem that there exists a polygon, the generic Newton polygon, such that for any $g$, the Newton polygon of $L(g,\chi;T)$ lies above it, and they are equal over a non empty Zariski open subset of the subvariety of the affine space $\A^N=\Spec \overline{\F}_p[\alpha_1,\ldots,\alpha_N]$ parametrising the family. 

We show that the generic Newton polygon for the above family of $L$-functions is the combinatorial polygon from Definition \ref{defpoly}, and for any vertex of this polygon, we determine for which $g$ it is a vertex of the Newton polygon of $L(g,\chi;T)$.

\subsection{Congruences for the coefficients, and the Hasse polynomials}

We fix $\alpha_1,\ldots,\alpha_N$ and a field $\F_q$ containing them and the $d$-th roots of unity 

We first use (\ref{exprSn}) and a more precise version of Stickelberger's theorem on Gauss sums to determine a congruence for the degree $n$ coefficient of the $L$-function, where $n$ is the abscissa of a vertex $(n,B(n))$ of the polygon $\Pi(\gamma,e)$.

If we replace the coefficients $c_k$ of the Ax polynomial $C$ by their expressions in terms of Gauss sums, and use the equality $S_n=(q-1)L_n$, the formula (\ref{exprSn}) becomes
\[
L_n=(q-1)^{n(n+2-N)}\left(\prod_{i=n+2}^N g(-b_i)\right)^{-1}\sum_{U\in A_n(\gamma,q)}\prod_{j=1}^{n+1}\prod_{i=n+2}^{N} g(-u_{ij})\omega^{u_{ij}}(\alpha_{ij}) 
\]
Now recall Stickelberger's congruence: for any $1\leq u\leq q-2$ with base $p$ expansion $u=\sum p^tu^{(t)}$, we have
\[
g(-i)\equiv \frac{\pi^{s_p(u)}}{u^{(0)}!\ldots u^{(m-1)}!} \mod \pi^{s_p(u)+1}
\]
where $\pi=\psi(1)-1$ is the generator of the maximal ideal of $\Z_p[\zeta_p,\zeta_{q-1}]$ chosen above.

In order to have more compact notations, we set $u!!:=u^{(0)}!\ldots u^{(m-1)}!$ and $0!!=-1$ for the formula to extend to the case $i=0$.

As we have already seen, the term in the preceding expression for $L_n$ corresponding to $U$ has $\pi$-adic valuation $s_p(U)$. This weight is at least $mB(n)+\sum s-p(b_i)$, and this is an equality if and only we have $U\in M_n(\gamma,q)$. 

Using the fact that the coefficient $L_n$ is in $\Z_p[\zeta_{q-1}]$ whose maximal ideal is generated by $p$, and that $\pi^{p-1}=-p$, we obtain the congruence

\[
L_n\equiv \pm\prod_{i=n+2}^N b_i!!\left(\sum_{U\in M_n(\gamma,q)}\prod_{j=1}^{n+1}\prod_{i=n+2}^{N} \frac{\omega^{u_{ij}}(\alpha_{ij})}{u_{ij}!!}\right)(-p)^{mB(n)} \mod p^{mB(n)+1}
\]

We focus on the sum between parentheses in the above formula. Actually the vertex of abscissa $n$ of the Newton polygon of the $L$-function and of the combinatorial polygon coincide if and only if the $q$-adic valuation of $L_n$ is $B(n)$ if and only if the reduction modulo $p$ in $\F_q$ of this sum is non zero.

We now determine that reduction; note that since the Teichmüller character is a section of reduction modulo $p$, we just have to remove $\omega$ to get a first expression. Then we replace the $\alpha_{ij}$ by their values; recall these are evaluations of Lagrange interpolation polynomials
\[
\alpha_{ij}=\prod_{k=1,k\neq j}^{n+1} (\alpha_i-\alpha_k) (\alpha_j-\alpha_k)^{-1}
\]
Using the congruences $\sum_i u_{ij}\equiv b_i\mod q-1$, $\sum_j u_{ij}\equiv -b_j\mod q-1$, we get 
\[
\prod_{j=1}^{n+1}\prod_{i=n+2}^{N}\alpha_{ij}^{u_{ij}}=\prod_{k=1}^{n+1}\prod_{l=1,~ l\neq k}^{N}(\alpha_l-\alpha_k)^{b_l}\prod_{j=1}^{n+1}\prod_{i=n+2}^{N}(\alpha_i-\alpha_j)^{-u_{ij}}.
\]
If we get rid of the (non zero) double product over $(k,l)$ that does not depend on $U$, we get the following

\begin{lemma}
\label{vertexcoincidence}
The Newton polygon of the $L$-function and of the combinatorial polygon coincide at the vertex with abscissa $n$ if, and only if we have
\[
\sum_{U\in M_n(\gamma,q)}\prod_{j=1}^{n+1}\prod_{i=n+2}^{N} \frac{(\alpha_{i}-\alpha_j)^{-u_{ij}}}{u_{ij}!!} \in \F_q^\times
\]
\end{lemma}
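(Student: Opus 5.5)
The plan is to assemble the computations carried out just before the statement; no new idea is needed, only care with the normalisations. By Corollary \ref{lowerbound}, the Newton polygon of $L(g,\chi;T)$ lies above $\Pi(\gamma,e)$. Since $(n,B(n))$ is a vertex of $\Pi(\gamma,e)$, the two polygons share this vertex if and only if $v_q(L_n)=B(n)$. The reverse implication is immediate. For the direct one, if the Newton polygon passes through $(n,B(n))$, then the point $(n,v_q(L_n))$ lies on or above the Newton polygon and, by Proposition \ref{minoval}, on or below $(n,B(n))$ as well. Hence $v_q(L_n)=B(n)$, which is equivalent to $v_p(L_n)=mB(n)$.

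Next I use the congruence already displayed,
\[
L_n\equiv \pm\prod_{i=n+2}^N b_i!!\,\Sigma\,(-p)^{mB(n)} \mod p^{mB(n)+1},
\]
where $\Sigma$ is the sum over $M_n(\gamma,q)$ of $\prod_{i,j}\omega^{u_{ij}}(\alpha_{ij})/u_{ij}!!$. Two points need checking here. First, every term with $U\notin M_n(\gamma,q)$ has $\pi$-adic valuation at least $s_p$-minimum plus one. After dividing by $\prod g(-b_i)$, this valuation is at least $m(p-1)B(n)+1$. I then need to upgrade ``$>$'' in $\pi$-adic valuation to a congruence modulo $p^{mB(n)+1}$. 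This works because $L_n$ and the main term $\Sigma\cdot(-p)^{mB(n)}$ lie in $\Z_p[\zeta_{q-1}]$, which is unramified, so a difference of $\pi$-valuation exceeding $m(p-1)B(n)$ has $p$-valuation at least $mB(n)+1$. Second, the digit factorials $b_i!!$ and $u_{ij}!!$ are $p$-adic units, since all digits are at most $p-1$, and the Stickelberger leading term $\pi^{s_p}$ combines into $(-p)^{mB(n)}$ by $\pi^{p-1}=-p$. Consequently $v_p(L_n)=mB(n)$ if and only if $\Sigma\not\equiv 0 \mod p$, that is, if and only if its reduction in $\F_q$ is nonzero.

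Finally, I reduce $\Sigma$ modulo $p$. Since $\omega$ is a section of reduction, this removes the $\omega$'s and gives $\sum_{U}\prod \alpha_{ij}^{u_{ij}}/u_{ij}!!$ in $\F_q$. I substitute the Lagrange value $\alpha_{ij}=\prod_{k\neq j}(\alpha_i-\alpha_k)(\alpha_j-\alpha_k)^{-1}$ and expand $\prod_{i,j}\alpha_{ij}^{u_{ij}}$. Collecting exponents uses, for each fixed index, the row and column sums $\sum_i u_{ij}$ and $\sum_j u_{ij}$. These are congruent modulo $q-1$ to $-b_j$ and $b_i$ respectively, and exponents only matter modulo $q-1$ on $\F_q^\times$. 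This step is where I expect the only real bookkeeping obstacle: one must verify that everything except $\prod_{i,j}(\alpha_i-\alpha_j)^{-u_{ij}}$ collapses to the $U$-independent nonzero factor $\prod_{k\le n+1}\prod_{l\neq k}(\alpha_l-\alpha_k)^{b_l}$. This uses the pairwise distinctness of the $\alpha$'s, together with a careful split of $k$ into $k\le n+1$, $k=j$ and $k\neq j$. Once this factor is pulled out as a nonzero common constant, nonvanishing of $\Sigma$ modulo $p$ is equivalent to the sum in the lemma being in $\F_q^\times$, which proves the statement.
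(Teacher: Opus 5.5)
Your three steps (reduction to $v_q(L_n)=B(n)$, the Stickelberger congruence modulo $p^{mB(n)+1}$, and the Lagrange bookkeeping) are exactly the paper's argument, and the Lagrange bookkeeping at the end is correct. The genuine gap is in the middle step. To reduce $v_p(L_n)=mB(n)$ to the non-vanishing of the sum over $M_n(\gamma,q)$, the sign $\pm$ in the displayed congruence must be the same for every $U$. You only check that the $u_{ij}!!$ are units. With the paper's normalisation ($g(-u)\equiv \pi^{s_p(u)}/u!!$ together with $0!!=-1$) this uniformity fails.

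For $\pi=\psi(1)-1$ the correct congruence is $g(-u)\equiv -\pi^{s_p(u)}/u!!$ for \emph{all} $0\le u\le q-2$, where $u!!=\prod_k u^{(k)}!$, so that $0!!=1$. For instance, for $q=p$ and $u=1$ one gets $g(-1)=\sum_{x=1}^{p-1}\omega(x)^{-1}(1+\pi)^x\equiv \pi\sum_{x=1}^{p-1}x\,\omega(x)^{-1}\equiv -\pi \bmod \pi^2$, and $u=0$ is consistent with $g(0)=-1$. Hence $\prod_{i,j}g(-u_{ij})$ has leading term $(-1)^{(N-n-1)(n+1)}\pi^{s_p(U)}/\prod_{i,j}u_{ij}!!$ with $0!!=1$, a sign independent of $U$. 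With $0!!=-1$, the term of $U$ picks up the extra factor $(-1)^{\#\{(i,j):\,u_{ij}=0\}}$, which varies over $M_n(\gamma,q)$.

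This changes the truth of the statement. Take $d=3$, $p=q=7$ and $\a=(2,1,1,2)$; then $\sigma(0)=1$, so $(1,0)$ is a vertex of $\Pi(\gamma,e)$. Here $M_1(\gamma,7)$ consists of the vectors $(u_{31},u_{32},u_{41},u_{42})=(t,2-t,2-t,2+t)$ with $t=0,1,2$. For $(\alpha_1,\ldots,\alpha_4)=(0,1,4,3)$ the three terms of the sum in the statement are $4/0!!$, $-1$ and $-2$. The sum is therefore $0$ if $0!!=-1$ and $1$ if $0!!=1$. On the other hand, $\chi(y)=\omega(y)^2\equiv y^2 \bmod 7$, so
$L_1=s_1\equiv 1+\sum_{x\in\F_7}\bigl(x^2(x-1)(x-4)(x-3)^2\bigr)^2\equiv -3\not\equiv 0$,
and the vertex \emph{is} attained.

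So your argument, once the sign is checked, proves the lemma with $0!!=1$. With the paper's convention the statement is false, and the paper's own derivation of the displayed congruence has the same defect.

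A smaller slip: Proposition \ref{minoval} gives $v_q(L_n)\ge B(n)$, not $\le$, so your direct implication does not follow as written. The correct reason is that every vertex of a Newton polygon is one of the points $(k,v_q(L_k))$.
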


Note that since we assumed that $(n,B(n))$ is a vertex of the polygon $\Pi(\gamma,e)$, the map $\sigma$ takes the value $n$, an the index set of the sum is non-empty from the preceding section. Correctly normalized, this sum gives a first polynomial

\begin{definition}
\label{Hasseq}
Fix $\gamma,q$ as above, and let $\pi$ denote a vertex of the polygon $\Pi(\gamma,e)$ with abscissa $n$. 

The \emph{leading polynomial for the vertex $\pi$ and the field $\F_q$} is the polynomial
\[
\H_{\gamma,q}^{(\pi)}=\sum_{U\in M_n(\gamma,q)}\prod_{j=1}^{n+1}\prod_{i=n+2}^{N} \frac{(\alpha_{i}-\alpha_j)^{q-1-u_{ij}}}{u_{ij}!!} \in \F_p[\alpha_1,\ldots,\alpha_N]
\]
\end{definition}

We have seen that minimal solutions modulo $q$ can be expressed from minimal solutions modulo $p^r$. This gives a factorization for the above polynomials

\begin{proposition}
\label{normhasse}
Notations are as above. Assume moreover that $\sigma(t_0)=n$ for some $t_0\in\{0,\ldots,r-1\}$. 

Then for any $\alpha_1,\ldots,\alpha_N\in \F_q$, we have
\[
\H_{\gamma,q}^{(\pi)}(\alpha_1,\ldots,\alpha_N)=\Norm_{\F_q/\F_{p^r}}\left(\H_{\gamma,p^r}^{(\pi)}(\alpha_1,\ldots,\alpha_N)\right)
\]
\end{proposition}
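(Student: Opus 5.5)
The plan is to reduce to the case $t_0=0$ with the shift $\delta$, and then to let Proposition \ref{minsol} turn the sum defining $\H_{\gamma,q}^{(\pi)}$ into a product of Frobenius conjugates of $\H_{\gamma,p^r}^{(\pi)}$. Throughout, the $\alpha_i$ are pairwise distinct elements of $\F_q$, so every $\alpha_i-\alpha_j$ lies in $\F_q^\times$. Hence $(\alpha_i-\alpha_j)^k$ depends only on $k$ modulo $q-1$, which lets us manipulate exponents freely. The coefficients $1/u!!$ lie in $\F_p$.

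\textbf{Case $t_0=0$.} By Proposition \ref{minsol}, every $U\in M_n(\gamma,q)$ with $q=p^{rs}$ can be written uniquely as $U=\sum_{t=0}^{s-1}p^{rt}U^{(t)}$ with $U^{(t)}\in M_n(\gamma,p^r)$. Uniqueness holds because this is just the base $p^r$ expansion of each coordinate, and every such sum occurs. Coordinatewise this gives
\[
q-1-u_{ij}=\sum_{t=0}^{s-1}p^{rt}\bigl(p^r-1-u_{ij}^{(t)}\bigr).
\]
Moreover, the base $p$ digits of $u_{ij}$ are the concatenation of those of the $u^{(t)}_{ij}$, so $u_{ij}!!=\prod_t u^{(t)}_{ij}!!$. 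Hence the term of $U$ in Definition \ref{Hasseq}, evaluated at $(\alpha_1,\ldots,\alpha_N)$, is
\[
\prod_{t=0}^{s-1}\Bigl(\prod_{i,j}\frac{(\alpha_i-\alpha_j)^{p^r-1-u^{(t)}_{ij}}}{u^{(t)}_{ij}!!}\Bigr)^{p^{rt}},
\]
using that $x\mapsto x^{p^{rt}}$ fixes $\F_p$. Summing over the independent choices of $U^{(0)},\ldots,U^{(s-1)}$ factors the sum as
\[
\prod_{t=0}^{s-1}\Bigl(\H_{\gamma,p^r}^{(\pi)}(\alpha_1,\ldots,\alpha_N)\Bigr)^{p^{rt}},
\]
which is exactly $\Norm_{\F_q/\F_{p^r}}\bigl(\H_{\gamma,p^r}^{(\pi)}(\alpha_1,\ldots,\alpha_N)\bigr)$.

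\textbf{General $t_0$.} Set $\gamma'=(d,N,p^{t_0}\a)$. By Remark \ref{polygonorbit}, its signature is $\sigma'(t)=\sigma(t+t_0)$, so $\sigma'(0)=n$ and $\gamma'$ defines the same vertex $\pi$. The shift $\delta^{t_0}$ is a bijection $M_n(\gamma,q)\to M_n(\gamma',q)$, and likewise $M_n(\gamma,p^r)\to M_n(\gamma',p^r)$ using the shift relative to $p^r$. Since $\delta^{t_0}u\equiv p^{t_0}u\bmod q-1$, we get
\[
q-1-\delta^{t_0}u\equiv p^{t_0}(q-1-u)\pmod{q-1}.
\]
The shift only permutes digits cyclically, so $\delta^{t_0}u!!=u!!$. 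Evaluated at nonzero differences, this gives
\[
\H_{\gamma',q}^{(\pi)}(\alpha)=\bigl(\H_{\gamma,q}^{(\pi)}(\alpha)\bigr)^{p^{t_0}}.
\]
The same computation modulo $p^r-1$ gives the analogous identity for $\H_{\gamma,p^r}^{(\pi)}$. Applying the case $t_0=0$ to $\gamma'$, and using that Frobenius commutes with the norm and is injective, yields the proposition for $\gamma$.

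\textbf{Expected obstacle.} The main point to verify is the bookkeeping in the factorization: that $u!!$ is multiplicative across base $p^r$ blocks, and that the normalization $q-1-u_{ij}$ matches $p^r-1-u^{(t)}_{ij}$ blockwise. It is also important that one may work modulo $q-1$ in the exponents; this is exactly why the identity is claimed only for evaluations at points of $\F_q$ with distinct coordinates, not as an identity of polynomials.
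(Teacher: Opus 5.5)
You got the case $t_0=0$ right: it is exactly the paper's argument, and you correctly use the exponents $p^{rt}$. Your derivation of $\H_{\gamma',q}^{(\pi)}(\alpha)=\bigl(\H_{\gamma,q}^{(\pi)}(\alpha)\bigr)^{p^{t_0}}$ from the bijection $\delta^{t_0}\colon M_n(\gamma,q)\to M_n(\gamma',q)$ is also a valid replacement for the paper's change of variables $x\mapsto x^{p^{t_0}}$ in the $L$-function.

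\textbf{Main gap.} The problem is the sentence ``the same computation modulo $p^r-1$''. That computation reduces exponents of $\alpha_i-\alpha_j$ modulo $p^r-1$. This is legitimate only when $\alpha_i-\alpha_j\in\F_{p^r}^\times$; you noted yourself that on $\F_q$ only reduction modulo $q-1$ is allowed. Write $\delta_r$ for the shift modulo $p^r-1$ and $\kappa(v)=\lfloor p^{t_0}v/(p^r-1)\rfloor$, the integer formed by the top $t_0$ base $p$ digits of $v$. Then one has exactly
\[
p^r-1-\delta_r^{t_0}v=p^{t_0}(p^r-1-v)+(p^r-1)\bigl(\kappa(v)-p^{t_0}+1\bigr).
\]
So for $\alpha\in\F_q$, the term of $\H_{\gamma',p^r}^{(\pi)}(\alpha)$ indexed by $\delta_r^{t_0}V$ equals the $p^{t_0}$-th power of the term of $\H_{\gamma,p^r}^{(\pi)}(\alpha)$ indexed by $V$, multiplied by $\prod_{i,j}(\alpha_i-\alpha_j)^{(p^r-1)(\kappa(v_{ij})-p^{t_0}+1)}$. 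This factor equals $1$ on $\F_{p^r}$ but not in general on $\F_q$.

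Moreover, it depends on $V$. You only need this step when $\sigma(0)\neq n$, and then the top digits of the $v_{ij}$ vary over $M_n(\gamma,p^r)$ in general. Changing them by a transportation move multiplies the factor by a $(p^r-1)$-th power of a cross-ratio of the $\alpha_i$. A $V$-dependent factor cannot be pulled out of the sum, and taking norms does not remove it, so your final chain of equalities breaks. What your argument actually proves is
\[
\bigl(\H_{\gamma,q}^{(\pi)}(\alpha)\bigr)^{p^{t_0}}=\Norm_{\F_q/\F_{p^r}}\bigl(\H_{\gamma',p^r}^{(\pi)}(\alpha)\bigr)\quad\text{for all }\alpha\in\F_q,
\]
together with the stated identity for $\alpha\in\F_{p^r}^N$.

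The paper's proof has the same gap. Its last sentence (``the term in the norm is nothing but $\H_{\gamma,p^r}^{(\pi)}(\alpha_1,\ldots,\alpha_N)$ from above'') applies the change of variables at level $p^r$, which requires $\alpha_i\in\F_{p^r}$. The displayed identity is what Theorem~\ref{main2} actually needs, provided the vertex polynomial is taken to be $\H_{\gamma',p^r}^{(\pi)}$ for an orbit representative $\gamma'$ with $\sigma'(0)=n$.

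\textbf{Sign issue.} There is also a problem at the point you flagged yourself. With the convention $0!!=-1$ in force, the identity $u_{ij}!!=\prod_t u_{ij}^{(t)}!!$ fails whenever some base $p^r$ block of $u_{ij}$ vanishes. Each zero block contributes $-1$ on the right and $1$ on the left, and $u_{ij}=0$ gives $-1$ against $(-1)^s$. The resulting discrepancy is the sign $(-1)^{Z(U)+\sum_t Z(U^{(t)})}$, where $Z$ counts zero entries, and this is in general not constant on $M_n(\gamma,q)$. The paper's factorization has the same problem.

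Digit concatenation proves multiplicativity only for the plain product $u^{(0)}!\cdots u^{(m-1)}!$, and that is in fact the right normalization. With $g(i)=\sum_x\psi(x)\omega^i(x)$, Stickelberger reads $g(-u)\equiv-\pi^{s_p(u)}/(u^{(0)}!\cdots u^{(m-1)}!)$ uniformly for $0\le u\le q-2$. For instance, for $q=3$ one gets $g(-1)=\zeta_3-\zeta_3^2\equiv-\pi \bmod \pi^2$, and for $u=0$ the formula gives $g(0)=-1$. So set $0!!=1$ and absorb the global sign $(-1)^{(N-n-1)(n+1)}$; then your factorization in the case $t_0=0$ is exact.
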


\begin{proof}
We first assume that $t_0=0$, and we apply Proposition \ref{minsol}: in the definition of $\H_{\gamma,q}^{(\pi)}$, we replace the sum over $U\in M_n(\gamma,q)$ by $s$ sums over $U^{(k)}\in M_n(\gamma,p^r)$, where the $U^{(k)}$ are the base $p^r$ digits of $U$, as in the proof of Proposition \ref{minsol}.

We have $u_{ij}=\sum p^k u_{ij}^{(k)}$; we deduce the equalities $u_{ij}!!=\prod_k u_{ij}^{(k)}!!$, $q-1-u_{ij}=\sum p^k(p^r-1- u_{ij}^{(k)})$. As a consequence, the term of the sum corresponding to $U$ can be rewritten
\[
\prod_{j=1}^{n+1}\prod_{i=n+2}^{N}\prod_{k=0}^{s-1} \left( \frac{(\alpha_{i}-\alpha_j)^{p^r-1-u_{ij}^{(k)}}}{u_{ij}^{(k)}!!}\right)^{p^k}
\]
Now we can use distributivity to get the factorization
\[
\H_{\gamma,q}^{(\pi)}(\alpha_1,\ldots,\alpha_N)=\prod_{k=0}^{s-1}\left(\sum_{U^{(k})\in M_n(\gamma,p^r)} \prod_{j=1}^{n+1}\prod_{i=n+2}^{N} \frac{(\alpha_{i}-\alpha_j)^{p^r-1-u_{ij}^{(k)}}}{u_{ij}^{(k)}!!}\right)^{p^k}
\]
and this is the desired result.

We come to the general case. We consider $\gamma'=(d,N,\a')$, $\a'=(a_1',\ldots,a_N')$ where $a_k'$ is the remainder of $p^{t_0}a_k$ modulo $d$. For any $\alpha_1,\ldots,\alpha_N$ as above, an easy variable change shows that for all $k\geq 1$, we have the equality of character sums $s_k(g)=s_k(g')$ where $g'(x):=\prod_{i=1}^N (x-\alpha_i^{p^{-t_0}})^{a_i'}$. We deduce the equality $L(g,\chi;T)=L(g',\chi,T)$, and for the vertex $\pi$ of abscissa $n$ (the combinatorial polygons associated to $\gamma$ and $\gamma'$ are the same from Remark \ref{polygonorbit})
\[
\H_{\gamma,q}^{(\pi)}(\alpha_1,\ldots,\alpha_N)=\H_{\gamma',q}^{(\pi)}(\alpha_1^{p^{-t_0}},\ldots,\alpha_N^{p^{-t_0}})
\]

We have $\sigma'(0)=n$ for the new signature associated to $\gamma'$, and we deduce from the first part of the proof
\[
\H_{\gamma,q}^{(\pi)}(\alpha_1,\ldots,\alpha_N)=\Norm_{\F_q/\F_{p^r}}\left(\H_{\gamma',p^r}^{(\pi)}(\alpha_1^{p^{-t_0}},\ldots,\alpha_N^{p^{-t_0}})\right)
\]
Now the term in the norm is nothing but $\H_{\gamma,p^r}^{(\pi)}(\alpha_1,\ldots,\alpha_N)$ from above; this concludes the proof.
\end{proof}

We have obtained a polynomial that no longer depends of the field containing the $\alpha_k$, but only on the characteristic 
\begin{definition}
\label{Hasse}
Fix $\gamma,p,\pi$ as above, we define the \emph{Hasse polynomial for the vertex $\pi$ in characteristic $p$} as the polynomial
$\H_{\gamma,p^r}^{(\pi)}$, and the \emph{Hasse polynomial for the polygon $\Pi(\gamma,e)$ in characteristic $p$} as 
\[
\H_{\gamma,p}:=\prod_{\pi\in \Sigma(\gamma,e)} \H_{\gamma,p^r}^{(\pi)}\in \F_p[\alpha_1,\ldots,\alpha_N]
\]
\end{definition}

\subsection{The generic Newton polygon}

We are ready to prove our main result about Newton polygons of $L$-functions.

\begin{theorem}
\label{main2}
The polygon $\Pi(\gamma,e)$ is the generic Newton polygon for the family of $L$-functions 
\[
L(\prod_{i=1}^N (x-\alpha_i)^{a_i},\chi,T),~ \alpha_1,\ldots,\alpha_N\in \overline{\F}_p,~ \alpha_i\neq \alpha_j
\] 

It is attained if, and only if we have $\H_{\gamma,p}(\alpha_1,\ldots,\alpha_N)\neq 0$.
\end{theorem}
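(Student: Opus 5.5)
We deduce the theorem from Corollary \ref{lowerbound}, Lemma \ref{vertexcoincidence} and Proposition \ref{normhasse}. The key observation is that a Newton polygon lying above $\Pi(\gamma,e)$ with the same endpoints coincides with it exactly when it passes through every vertex $(n,B(n))$, $\pi\in\Sigma(\gamma,e)$. Indeed, $\Pi(\gamma,e)$ is convex and the Newton polygon is the lower convex hull of the points $(n,v_q(L_n))$. If $v_q(L_n)=B(n)$ at each vertex abscissa, these points together with the common endpoints force the hull to lie on or below $\Pi(\gamma,e)$ on each segment. Corollary \ref{lowerbound} gives the reverse inequality, so the two polygons are equal. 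Conversely, equality of the polygons forces $v_q(L_n)=B(n)$ at vertices, since a vertex of a Newton polygon is always attained by a coefficient.

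Now fix $\alpha_1,\ldots,\alpha_N$ pairwise distinct in some $\F_q$ with $q\equiv 1 \bmod d$. For each vertex $\pi$ of abscissa $n$, the Remark after Definition \ref{defpoly} shows that $\sigma$ takes the value $n$, so $\sigma(t_0)=n$ for some $t_0\in\{0,\ldots,r-1\}$ by $r$-periodicity. By Lemma \ref{vertexcoincidence}, $v_q(L_n)=B(n)$ holds iff the sum appearing there is non-zero. That sum differs from $\H^{(\pi)}_{\gamma,q}(\alpha)$ only by the factors $(\alpha_i-\alpha_j)^{q-1}=1$, which are well defined because the $\alpha_i$ are distinct. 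By Proposition \ref{normhasse}, this value equals $\Norm_{\F_q/\F_{p^r}}(\H^{(\pi)}_{\gamma,p^r}(\alpha))$, which is non-zero iff $\H^{(\pi)}_{\gamma,p^r}(\alpha)\neq 0$. Taking the product over $\Sigma(\gamma,e)$ then gives: the Newton polygon of $L(g,\chi;T)$ equals $\Pi(\gamma,e)$ iff $\H_{\gamma,p}(\alpha)\neq0$. This condition does not depend on the auxiliary field $\F_q$, consistently with the fact that the $q$-adic Newton polygon is invariant under extension of scalars.

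To conclude that $\Pi(\gamma,e)$ is the \emph{generic} polygon, it remains to show that the open set $\{\H_{\gamma,p}\neq0\}$ meets the complement of the diagonals. Since $\F_p[\alpha_1,\ldots,\alpha_N]$ is a domain, it suffices to show that each $\H^{(\pi)}_{\gamma,p^r}$ is a non-zero polynomial, and this is the main obstacle. The index set $M_n(\gamma,p^r)$ is non-empty by the existence result of Section \ref{sec2}, and the coefficients $1/u_{ij}!!$ are units mod $p$ since digits are at most $p-1$. So the only danger is cancellation between different $U$.

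I would first show that distinct $U\in M_n(\gamma,p^r)$ give distinct monomials in the $\alpha$'s. One way is to specialize so that the factors $(\alpha_i-\alpha_j)$ become independent: for instance, set $\alpha_j=0$ for a single $j\le n+1$ at a time, or better, expand around a degenerate configuration. I would also use a monomial order: the exponent vector $(q-1-u_{ij})$ of the product of the independent linear forms $\alpha_i-\alpha_j$ determines $U$, and one can pick a term order for which the leading monomial comes from a unique extremal $U$. Concretely, substitute $\alpha_j=t^{\,c_j}$ with rapidly growing $c_j$ (ordering the indices $j\le n+1$ below the $i\ge n+2$). Then $(\alpha_i-\alpha_j)^{e_{ij}}$ has leading term $\alpha_i^{e_{ij}}$, so the leading monomial records the row sums $\sum_j e_{ij}$, which are fixed by the congruences. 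To break this tie, I would refine by choosing $c$'s so that the term $\alpha_i^{e}\alpha_j^{f}$ expansions single out a lexicographically maximal $U$. The lexicographically largest minimal solution then contributes a monomial with coefficient $\prod 1/u_{ij}!!\cdot\prod\binom{e_{ij}}{\cdot}$, and the binomials involved are non-zero mod $p$ by Lucas's theorem. This uses that $e_{ij}=p^r-1-u_{ij}$ has digits $p-1-u^{(k)}_{ij}$, so choosing the digitwise-maximal split avoids carries. Verifying this no-carry/uniqueness claim carefully is where I expect the real work to lie.
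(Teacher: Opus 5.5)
Your first two paragraphs are exactly the paper's proof of Theorem~\ref{main2}. That proof combines the lower bound of Corollary~\ref{lowerbound}, the vertex-by-vertex criterion of Lemma~\ref{vertexcoincidence}, Proposition~\ref{normhasse} and the product over $\Sigma(\gamma,e)$. You are more careful than the paper on three points: why agreement at the vertices forces equality of the polygons, the factors $(\alpha_i-\alpha_j)^{q-1}=1$, and the existence of $t_0$. This part correctly proves the second assertion.

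The paper's proof stops there (``we prove only the second assertion''). Its only input towards genericity is $M_n(\gamma,q)\neq\emptyset$. You are right that non-emptiness does not exclude cancellation. The first assertion therefore also needs $\H^{(\pi)}_{\gamma,p^r}\not\equiv 0$ for every vertex $\pi$, and the paper does not supply this step either.

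Your proposal does not supply it, so the first assertion is left with a genuine gap, and the sketch does not work as written.

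First, the forms $\alpha_i-\alpha_j$ ($j\le n+1<i$) are not independent. At an interior vertex $n+1\ge 2$ and $N-n-1\ge 2$, and
\[
(\alpha_i-\alpha_j)-(\alpha_i-\alpha_{j'})-(\alpha_{i'}-\alpha_j)+(\alpha_{i'}-\alpha_{j'})=0 .
\]
So no specialization makes them independent, and distinct exponent matrices need not give linearly independent products.

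Second, under any term order the leading term of $\prod(\alpha_i-\alpha_j)^{e_{ij}}$ is the product of the leading terms of the factors. Its coefficient is therefore $\pm1$, so binomial coefficients and Lucas's theorem play no role there, and it depends only on the induced ordering of the variables. After reducing to $\sigma(0)=n$ as in the proof of Proposition~\ref{normhasse}, every $U\in M_n(\gamma,p^r)$ has the exact margins $\sum_j u_{ij}=b_i$ and $\sum_i u_{ij}=p^r-1-b_j$. Adding the cycle $+1,-1,-1,+1$ at $(i,j),(i,j'),(i',j),(i',j')$ preserves these margins. It also preserves the leading monomial whenever both of $\alpha_i,\alpha_{i'}$ exceed both of $\alpha_j,\alpha_{j'}$, or conversely.

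Your own first ordering, with all row variables above all column variables, makes every $U$ tie, as you noticed. No refinement of the order can separate two solutions with literally the same leading monomial. Changing the order does not help in general either: once $n+1\ge3$ and $N-n-1\ge3$, every ordering of the variables contains such a degenerate $2\times 2$ block. For $r=1$, $M_n(\gamma,p)$ is the set of all non-negative integer matrices with these margins, so it is stable under such moves while entries stay non-negative.

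The leading coefficient is then a signed sum of the products $\prod 1/u_{ij}!!$ over the tied solutions. Nothing in your sketch shows this sum is non-zero mod $p$; Lucas's theorem controls single binomial coefficients, not sums of this kind.

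What is missing is an actual non-vanishing argument for $\H^{(\pi)}_{\gamma,p^r}$. This would mean an explicit monomial, specialization or degeneration at which its coefficient or value can be computed and shown to be a unit. Non-emptiness of $M_n(\gamma,p^r)$ alone is not enough.
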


\begin{proof}
We know that $\Pi(\gamma,e)$ is a lower bound for the generic Newton polygon from Proposition \ref{lowerbound}.

Thus it is sufficient to show that it is attained, and we prove only the second assertion. From Lemma \ref{vertexcoincidence} and Proposition \ref{normhasse}, the Newton polygon of the $L$-function shares the vertex $\pi$ with $\Pi(\gamma,e)$ if, and only if we have $\H_{\gamma,p^r}^{(\pi)}(\alpha_1,\ldots,\alpha_N)\neq 0$.

Guaranteeing that the product over the vertices is non zero gives a necessary and sufficient condition for the coincidence of the two polygons.
\end{proof}

\bibliographystyle{amsalpha}
\bibliography{MuOrdinarity}

\providecommand{\bysame}{\leavevmode\hbox to3em{\hrulefill}\thinspace}
\providecommand{\MR}{\relax\ifhmode\unskip\space\fi MR }
\providecommand{\MRhref}[2]{%
  \href{http://www.ams.org/mathscinet-getitem?mr=#1}{#2}
}
\providecommand{\href}[2]{#2}
\begin{thebibliography}{LMPT22}

\bibitem[AS14]{adsp}
Alan Adolphson and Steven Sperber, \emph{Hasse invariants and mod {{\(p\)}}
  solutions of {{\(A\)}}-hypergeometric systems}, J. Number Theory \textbf{142}
  (2014), 183--210.

\bibitem[Ax64]{ax}
James Ax, \emph{Zeroes of polynomials over finite fields}, Am. J. Math.
  \textbf{86} (1964), 255--261.

\bibitem[BF07]{bf}
R{\'e}gis Blache and {\'E}ric F{\'e}rard, \emph{Newton stratification for
  polynomials: the open stratum}, J. Number Theory \textbf{123} (2007), no.~2,
  456--472.

\bibitem[Bla12]{bla}
R{\'e}gis Blache, \emph{Valuation of exponential sums and the generic first
  slope for {Artin}-{Schreier} curves}, J. Number Theory \textbf{132} (2012),
  no.~10, 2336--2352.

\bibitem[Bou01]{bouw}
Irene~I. Bouw, \emph{The {{\(p\)}}-rank of ramified covers of curves.}, Compos.
  Math. \textbf{126} (2001), no.~3, 295--322.

\bibitem[DH34]{daha}
Harold Davenport and Helmut Hasse, \emph{Die {Nullstellen} der
  {Kongruenzzetafunktionen} in gewissen zyklischen {F{\"a}llen}}, J. Reine
  Angew. Math. \textbf{172} (1934), 151--182.

\bibitem[Dol13]{doll}
John Dollarhide, \emph{On the {{\(L\)}}-function of multiplicative character
  sums}, Trans. Am. Math. Soc. \textbf{365} (2013), no.~3, 1637--1668.

\bibitem[Kat79]{kasl}
Nicholas~M. Katz, \emph{Slope filtration of $f$-crystals}, Journ\'ees de
  G\'eom\'etrie Alg\'ebrique de Rennes, Ast\'erisque, no.~63, Soci\'et\'e
  math\'ematique de France, 1979, pp.~113--163.

\bibitem[Kat04]{kag2}
\bysame, \emph{Notes on {{\(G_{2}\)}}, determinants, and equidistribution},
  Finite Fields Appl. \textbf{10} (2004), no.~2, 221--269.

\bibitem[LMPT19]{lmpt}
Wanlin Li, Elena Mantovan, Rachel Pries, and Yunqing Tang, \emph{Newton
  polygons arising from special families of cyclic covers of the projective
  line}, Res. Number Theory \textbf{5} (2019), no.~1, 31, Id/No 12.

\bibitem[LMPT22]{lmpt2}
\bysame, \emph{Newton polygon stratification of the {Torelli} locus in unitary
  {Shimura} varieties}, Int. Math. Res. Not. (2022), no.~9, 6464--6511.

\bibitem[LMS24]{limasi}
Yuxin Lin, Elena Mantovan, and Deepesh Singhal, \emph{Abelian covers of
  {{\(\mathbb{P}^1\)}} of {{\(p\)}}-ordinary {Ekedahl}-{Oort} type}, Int. Math.
  Res. Not. (2024), no.~23, 14369--14392.

\bibitem[LZ05]{lizh}
Hanfeng Li and Hui~June Zhu, \emph{Zeta functions of totally ramified
  {{\(p\)}}-covers of the projective line}, Rend. Semin. Mat. Univ. Padova
  \textbf{113} (2005), 203--225.

\bibitem[Moo04]{most}
Ben Moonen, \emph{Serre-{Tate} theory for moduli spaces of {PEL} type},
  vol.~37, 2004, pp.~223--269.

\bibitem[Moo10]{moss}
\bysame, \emph{Special subvarieties arising from families of cyclic covers of
  the projective line}, Doc. Math. \textbf{15} (2010), 793--819.

\bibitem[Phi27]{phig}
Antonine Phigareau, \emph{The p-powers dividing certain exponential sums},
  Finite Fields and Their Applications \textbf{117} (2027), 102876.

\bibitem[Pri25]{prie}
Rachel Pries, \emph{The {Torelli} locus and {Newton} polygons}, Preprint,
  {arXiv}:2509.00998.

\bibitem[RR96]{rari}
M.~Rapoport and M.~Richartz, \emph{On the classification and specialization of
  {{\(F\)}}-isocrystals with additional structure}, Compos. Math. \textbf{103}
  (1996), no.~2, 153--181.

\bibitem[Sti90]{stic}
L.~Stickelberger, \emph{Ueber eine {Verallgemeinerung} der {Kreistheilung}.},
  Math. Ann. \textbf{37} (1890), 321--367.

\bibitem[VW13]{viwe}
Eva Viehmann and Torsten Wedhorn, \emph{Ekedahl-{Oort} and {Newton} strata for
  {Shimura} varieties of {PEL} type}, Math. Ann. \textbf{356} (2013), no.~4,
  1493--1550.

\end{thebibliography}

\end{document}